\documentclass[reqno]{amsart}

\usepackage{amsmath,amsthm,amsfonts,amssymb,young}
\usepackage{array, boldline, makecell, booktabs}
\usepackage{bm}
\usepackage{ytableau}
\usepackage{euscript, mathrsfs} 
\usepackage{tikz}
\usetikzlibrary{backgrounds}
\usetikzlibrary{patterns, decorations.pathreplacing}
\usepackage{mathtools}
\usepackage[colorlinks]{hyperref}
\usepackage{cleveref}
\usepackage[margin=1in]{geometry}
\numberwithin{equation}{section}    
\usepackage[all]{xy}
\usepackage{graphicx,import}
\usepackage{amscd}
\usepackage{color}
\usepackage{enumerate}
\usepackage{fourier}
\usepackage{cases}
\usepackage{cite}
\usepackage[T1]{fontenc}

\usepackage{amsthm}

\newtheorem*{maintheorem}{Main Theorem}

\newtheorem{thm}{Theorem}[section]
\newtheorem{lem}[thm]{Lemma}
\newtheorem{proposition}[thm]{Proposition}
\newtheorem{corollary}[thm]{Corollary}
\theoremstyle{definition}

\newtheorem{definition}[thm]{Definition}

\newcommand{\ZZ}{\mathbb{Z}}
\newcommand{\corner}{\mathsf{C}}
\newcommand{\coeff}{\mathsf{c}}
\newcommand{\xchar}[2]{x_{#1}(#2)}
\newcommand{\lchar}[2]{\sh(#1_{\le{#2}})}
\newcommand\Dyck{\operatorname{Dyck}}

\DeclareMathOperator{\wt}{wt}
\DeclareMathOperator{\Par}{Par}
\DeclareMathOperator{\dg}{dg}
\DeclareMathOperator{\inv}{inv}
\DeclareMathOperator{\Inv}{Inv}

\DeclareMathOperator{\sh}{sh}
\DeclareMathOperator{\SYT}{SYT}
\DeclareMathOperator{\Des}{Des}
\DeclareMathOperator{\Add}{Add}
\DeclareMathOperator{\FPI}{FPI}

\title{Haglund--Haiman--Loehr formula via Carlsson--Mellit algebra}

\author{Younggwang Cho}
\address{Department of Mathematics \\ Sungkyunkwan University \\ Suwon \\ Korea}
\email{brglory@g.skku.edu}

\author{Jaeseong Oh}
\address{Department of Mathematics \\ Sungkyunkwan University \\ Suwon \\ Korea}
\email{jaeseongoh@skku.edu}

\begin{document}

\begin{abstract}
We prove a Haglund--Haiman--Loehr type combinatorial formula for the torus fixed point classes $I_{\mu,w}$ in the equivariant $K$-theory $K_{\mathbb{C}^{*}\times\mathbb{C}^{*}}(\operatorname{PFH}_{n,n-k})$ of parabolic flag Hilbert schemes. Under the identification of Bechtloff Weising and Orr, our result gives an HHL type formula for modified partially symmetric Macdonald functions in terms of weighted partial Dyck paths. When $w=\emptyset$, it specializes to the classical HHL formula. The proof relies essentially on the Carlsson--Gorsky--Mellit action of the Carlsson--Mellit algebra $\mathbb{A}_{q,t}$.
\end{abstract}

\maketitle

\section{Introduction}
\label{sec:introduction}
\subsection{Main result}

Macdonald polynomials form one of the central families of symmetric
functions \cite{Mac88}, with connections to algebraic combinatorics, representation
theory, and geometry. Among their several normalizations, the modified
Macdonald polynomials
\(
\widetilde H_\mu(X;q,t)
\)
are particularly well suited to combinatorial and geometric
applications. A central problem in their theory was the
Schur positivity of
\[
\widetilde H_\mu(X;q,t)
=
\sum_{\lambda\vdash|\mu|}
\widetilde K_{\lambda,\mu}(q,t)s_\lambda(X).
\]
Haiman proved that
$\widetilde K_{\lambda,\mu}(q,t)\in\mathbb{Z}_{\geq0}[q,t]$ using the
geometry of the Hilbert scheme of points in the plane
\cite{Hai01}. In the corresponding fixed point picture, the
modified Macdonald polynomial $\widetilde H_\mu$ is associated with the
torus fixed point $I_\mu\in\operatorname{Hilb}^{|\mu|}(\mathbb{C}^2)$.

Although Haiman's theorem establishes Schur positivity, it does not
itself give a manifestly positive combinatorial formula for the Schur
coefficients.  The most prominent explicit combinatorial formula for
$\widetilde H_\mu$ is the Haglund--Haiman--Loehr formula.  If $\alpha$
is any rearrangement of the parts of $\mu'$, then
\begin{equation}\label{eq:intro-hhl}
  \widetilde H_\mu(X;q,t)
  =
  \sum_{\sigma:\dg'(\alpha)\rightarrow\mathbb{Z}_{>0}}
  q^{\inv_\alpha(\sigma)}
  \prod_{u\in\Des_\alpha(\sigma)}
  q^{-a_\alpha(u)}t^{l_\alpha(u)+1}
  x^\sigma,
\end{equation}
where the sum ranges over fillings of the column diagram of $\alpha$
\cite{HHL05,HHL08}.

The HHL formula admits a reformulation in terms of the weighted characteristic functions \(\chi(\pi,\wt)\) introduced by Carlsson and Mellit \cite[Section~3.2]{CM18}.
For each \(\alpha\), the reading order on the cells of \(\dg'(\alpha)\) determines a Dyck path \(\pi_\alpha\): the pairs below \(\pi_\alpha\) encode the attacking pairs, while its distinguished corners correspond to the descents. Assigning to the corner \(c_u\) associated with a descent cell \(u\) the weight
$$ \wt_\alpha(c_u) = q^{-a_\alpha(u)}t^{l_\alpha(u)+1}, $$
one obtains
\begin{equation}\label{eq:intro-hhl-characteristic}
\widetilde H_\mu(X;q,t)=
\chi(\pi_\alpha,\wt_\alpha).
\end{equation}

Carlsson, Gorsky, and Mellit extended the fixed point picture from
Hilbert schemes to the parabolic flag Hilbert schemes
$\operatorname{PFH}_{n,n-k}$ and constructed an action of the
Carlsson--Mellit algebra $\mathbb{A}_{q,t}$ on the equivariant $K$-theory \cite{CGM20}.  The torus fixed
points are indexed by pairs $(\mu,w)$, where $\mu$ is a partition and
$w=(w_1,\ldots,w_k)$ records an ordered horizontal strip of $\mu$
\cite{CGM20}.  Under the resulting isomorphism
\[
\bigoplus_{n\geq k}
K_{\mathbb{C}^*\times \mathbb{C}^*}
\bigl(\operatorname{PFH}_{n,n-k}\bigr)
\simeq
V_k
:=
\Lambda\otimes_{\mathbb{Q}(q,t)}
\mathbb{Q}(q,t)[y_1,\ldots,y_k],
\]
the fixed point classes give a distinguished basis
$\{I_{\mu,w}\}$ of $V_k$.  For $k=0$, this basis specializes to the
modified Macdonald basis:
\[
I_{\mu,\emptyset}
=
(-1)^{|\mu|}T_\mu^{-1}\widetilde H_\mu,
\qquad
T_\mu
=
q^{n(\mu')}t^{n(\mu)}
=
\prod_{u\in\mu}q^{i(u)-1}t^{j(u)-1}.
\]

The connection between \eqref{eq:intro-hhl-characteristic} and the
parabolic fixed point basis is provided by the action of the Carlsson--Mellit algebra.
In the polynomial representation of $\mathbb{A}_{q,t}$, the
characteristic function of an unweighted Dyck path $\pi$ is given by
$d_\pi(1)$, where $d_\pi$ is the word in the raising and lowering
operators $d_+$ and $d_-$ in $\mathbb{A}_{q,t}$ determined by $\pi$.  Weighted characteristic
functions can, in turn, be expanded as linear combinations of
unweighted ones.  Carlsson, Gorsky, and
Mellit give explicit formulas for the actions of $d_+$ and $d_-$ on the
fixed point basis $\{I_{\mu,w}\}$ \cite[Lemma~4.2]{CGM20}.  This common
operator structure suggests asking whether the fixed point
basis elements $I_{\mu,w}$ admit a formula extending
\eqref{eq:intro-hhl-characteristic}.

To each fixed point index $(\mu,w)$, we associate a weighted partial
Dyck path $(\pi_{\mu,w},\wt_{\mu,w})$. Its underlying path is determined
by the rearrangements of columns of $\mu$ with the ordering encoded by $w$, and its corner
weights are defined using the corresponding arm and leg statistics (for precise definitions of $\pi_{\mu,w}$ and $\wt_{\mu,w}$, see Section~\ref{sec:HHL_for_I_mu_w}).

The following is the introductory form of our main theorem.

\begin{maintheorem}[Theorem~\ref{thm:main}]
  \phantomsection \label{thm:intro-main}
  For every fixed point index $(\mu,w)$, we have
  \begin{equation}\label{eq:intro-main}
    (-1)^{|\mu|}T_\mu I_{\mu,w}
    =
    \chi(\pi_{\mu,w},\wt_{\mu,w}).
  \end{equation}
\end{maintheorem}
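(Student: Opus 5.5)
The plan is to prove \eqref{eq:intro-main} by induction on the length of the partial Dyck path, matching the recursive structure of $\chi(\pi,\wt)$ with the Carlsson--Gorsky--Mellit formulas for $d_\pm$ on the fixed point basis \cite[Lemma~4.2]{CGM20}.

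The first step is to write the weighted characteristic function recursively. A weighted partial Dyck path is a word in north and east steps whose corners carry weights. Following Carlsson--Mellit, $\chi(\pi,\wt)$ can be built from $1\in V_0$ by applying $d_-$ for east steps. For north steps one applies $d_+$ at an unweighted corner, and a combination such as $d_+ - \wt(c)\,(\cdot)$ (equivalently, the operator encoding $d_+$ modified by the weight via $y$-variables, or $d_+d_- - d_-d_+$ type corrections) at a weighted corner. I will fix this recursion precisely using the expansion of weighted characteristic functions into unweighted ones, as recalled before the statement. This gives $\chi(\pi,\wt)=\Phi_\pi(1)$ for an explicit word $\Phi_\pi$ in $\mathbb{A}_{q,t}$. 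Removing the last step of $\pi_{\mu,w}$ produces a shorter path. The key combinatorial claim to check is that this shorter path equals $\pi_{\mu',w'}$, with weights $\wt_{\mu',w'}$, for a fixed point index obtained by removing a cell: either the last entry $w_k$ (a $d_-$-type reduction) or a cell not in the strip (a $d_+$-type reduction).

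The second step is to verify the corresponding identity on fixed points. In the $d_-$ case, \cite[Lemma~4.2]{CGM20} expresses $d_-I_{\mu',w'}$ as a sum of $I_{\mu,w}$ over ways of forgetting the last strip cell, or adding a cell, with explicit rational coefficients. In the $d_+$ case, $d_+I_{\mu',w'}$ is a sum over addable cells with coefficients that are products of $(1-q^{a}t^{l})$-type factors. I would normalize by $(-1)^{|\mu|}T_\mu$ and show that the operator $\Phi$ attached to the last step, applied to $(-1)^{|\mu'|}T_{\mu'}I_{\mu',w'}$, yields exactly $(-1)^{|\mu|}T_\mu I_{\mu,w}$. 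The obstacle here is that $d_+$ on one fixed point produces many fixed points, not one. The correction terms coming from the corner weight $q^{-a}t^{l+1}$ must cancel all unwanted summands. Rather than peeling a single step, I therefore expect to peel a whole column of $\dg'(\alpha)$ at once, i.e.\ the block of steps corresponding to one column in the reading order. I would then prove the resulting telescoping identity among CGM coefficients by a partial-fraction / residue argument in the $y$-variable. This is the main obstacle, and I would first test it on the case $k=0$, where the answer must reproduce HHL via \eqref{eq:intro-hhl-characteristic}.

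Finally, the base case is the empty path, where $I_{\emptyset,\emptyset}=1$. Consistency with $k=0$ follows from $I_{\mu,\emptyset}=(-1)^{|\mu|}T_\mu^{-1}\widetilde H_\mu$, which gives independence of the choice of column rearrangement as a check. Induction on $|\mu|+k$ then completes the proof.
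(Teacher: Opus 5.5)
Your plan has a genuine gap exactly where you locate the ``main obstacle'', and the remedy you propose rests on a misreading of the Carlsson--Gorsky--Mellit formulas.

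First, $d_-$ does not produce a sum: $d_- I_{\lambda,w\cup\{x\}}=I_{\lambda,w}$ exactly. Also, $d_+$ adds a cell $x$ to $\lambda$ and \emph{prepends} it to the strip. So deleting a final North step of $\pi_{\mu,w}$ removes $w_1$, which is a strip cell, while deleting a final East step lengthens $w$. Your two reductions are reversed.

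More importantly, the coefficient of $I_{\lambda+x,\{x\}\cup w}$ in $d_+I_{\lambda,w}$ carries the factor $\prod_i (x-tw_i)/(x-qtw_i)$. This factor vanishes whenever $x$ sits directly above a strip cell, and this is the idea you are missing. Along $\pi_{\mu,w}$ the current strip is always the set of all column tops of the current shape after a North step, or all but one after an East step. Hence at a North step not preceded by an East step, every addable cell except the first-row one is killed individually. So $d_+$ produces a single fixed point; no cancellation and no weight is involved.

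Only at a corner, where the consecutive pair East--North is replaced by $\frac{q-\wt(c)}{q-1}d_+d_-+\frac{\wt(c)-1}{q-1}d_-d_+$, do two candidates survive. These are the cell above the strip cell just removed, and the first-row cell. The HHL weight $q^{-a_\alpha(u)}t^{l_\alpha(u)+1}$ equals precisely $qt\,x_\tau(i)/x_\tau(j)$ for the first-row choice, which makes that term's total coefficient vanish.

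Without this mechanism, ``peeling a whole column'' and an unspecified residue identity in $y$ have no content. Note also that the cells of a column of $\dg'(\alpha)$ are not consecutive in reading order, so they do not form a block of final steps.

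Once uniqueness is in hand, you still must show that each surviving step contributes exactly $-x$, so that the product telescopes to $(-1)^{|\mu|}T_\mu$. At a non-corner step this means $q^{|w|}d_{\lambda,x}\prod_i\frac{x-tw_i}{x-qtw_i}=1$. At a corner it means the analogous product times $\frac{q-\wt(c)}{q-1}$ equals $1$. The paper proves these by pairing the row factors of $d_{\lambda,x}$ with the strip factors to the left of $x$, and telescoping the column factors against the strip factors to the right.

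The paper organizes the whole argument as follows. It expands into unweighted paths and writes $d_\pi(1)$ as a sum over standard tableaux with product coefficients. A first-difference argument then shows that only $\tau^*$ survives, and the local identities are checked for it.

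Your step-by-step induction can be made to work along the same lines. To do so, replace the unproven claim that every prefix of $\pi_{\mu,w}$ is literally some $\pi_{\mu',w'}$ by the invariant that after each prefix the result is $(-1)^{|\lambda|}T_\lambda I_{\lambda,w}$, with $w$ the column tops, possibly minus the last one. As written, however, your proposal contains neither the uniqueness mechanism nor the coefficient evaluation.
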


When $w=\emptyset$, the construction gives the weighted Dyck path
associated with a rearrangement of the columns of $\mu$, and
\eqref{eq:intro-main} reduces to
\eqref{eq:intro-hhl-characteristic}. Thus
our main theorem extends the HHL formula from modified
Macdonald polynomials to the full parabolic fixed point basis.

The proof exploits the $\mathbb{A}_{q,t}$-algebra and its action on the fixed point basis.
In \eqref{eq:decomposition_multi}, we first
rewrite the weighted characteristic function as a linear combination of unweighted ones.
We then expand each unweighted characteristic function in the fixed point basis as in
Lemma~\ref{lem:c_pi_T}. The resulting weighted expansion is indexed by
standard Young tableaux as in \eqref{eq:eq:I_expansion_wt}. Then Lemma~\ref{lem:factorization} provides a product formula for the contribution for each standard Young tableau. For the path
associated with $(\mu,w)$, local vanishing factors force a unique tableau
\( \tau^* \) to survive, and its coefficient telescopes to
$(-1)^{|\mu|}T_\mu$.

This use of $\mathbb{A}_{q,t}$ fits into a broader pattern.  The algebra
was central to Carlsson and Mellit's proof of the Shuffle Conjecture and
to Mellit's proof of its rational version \cite{CM18,Mel21}.  More
recently, following Hikita's proof of the Stanley--Stembridge conjecture
\cite{Hik24}, Griffin, Mellit, Romero, Weigl, and Wen used the
$\mathbb{A}_{q,t}$-action to give another proof and recover Hikita's
formula \cite{GMRWW25}.  Our result adds the HHL formula and its
parabolic extension to this list of applications.

\subsection{Comparison with partially symmetric Macdonald theory}

The fixed point basis $I_{\mu,w}$ is closely related to partially
symmetric Macdonald polynomials.  Goodberry's type $A$ partially
symmetric Macdonald polynomials agree, up to conventions, with the
$m$-symmetric Macdonald polynomials studied by Lapointe and subsequently
by Concha and Lapointe \cite{Goo24,Lap25,CL23}.  Goodberry and Orr
introduced the modified functions
\(
\widetilde H_{(\lambda\mid\gamma)}(X\mid y;q,t)
\)
and conjectured that they realize the normalized fixed point classes of
parabolic flag Hilbert schemes \cite{Goo23}.  This conjecture was proved
by Bechtloff Weising and Orr
\cite[Theorem~6.12]{BWO25}.  Denoting their bijection of indices by
$\phi(\mu,w)=(\lambda\mid\gamma)$ and translating their conventions to
ours, their theorem gives
\begin{equation}\label{eq:intro-partial-macdonald}
(-1)^{|\mu|}T_\mu I_{\mu,w}
=
\widetilde H_{\phi(\mu,w)}(X\mid y;q,t).
\end{equation}
Consequently, our main theorem, equivalently \eqref{eq:intro-main}, gives an HHL type
formula for modified partially symmetric Macdonald functions:
\[
\widetilde H_{\phi(\mu,w)}(X\mid y;q,t)
=
\chi(\pi_{\mu,w},\wt_{\mu,w}).
\]

Blasiak, Haiman, Morse, Pun, and Seelinger obtained a closely related
HHL type formula in terms of flagged LLT polynomials.  More precisely,
they give a signed flagged-LLT expansion for the right-stable integral
forms $J_{\eta\mid\lambda}$
\cite[Proposition~7.5.2]{BHMPS25}, and relate these forms to modified
right-stable functions through a $q$-shifted nonsymmetric plethysm
\cite[Theorem 7.6.6]{BHMPS25}.  In view of
\eqref{eq:intro-partial-macdonald}, their result is closely parallel to
\eqref{eq:intro-main} after translating indices and
normalizations.  We do not pursue a term by term correspondence between
their flagged LLT summands and our weighted partial Dyck path
characteristic function.

The paper is organized as follows. Section~\ref{sec:Preliminaries} reviews the required
background on symmetric functions, weighted characteristic functions,
the Carlsson--Mellit algebra, and the fixed point basis $I_{\mu,w}$.
In Section~\ref{sec:HHL_for_I_mu_w}, we construct the weighted partial Dyck paths for $(\mu,w)$ and state the
main theorem.  Section~\ref{sec:proof} proves the theorem using the
Carlsson--Mellit action.

\section{Preliminaries}\label{sec:Preliminaries}
\subsection{Partitions and compositions} \label{sec:partition}
A \emph{partition} is a weakly decreasing finite sequence \( \lambda = (\lambda_1 ,\dots, \lambda_{\ell}) \) of positive integers.
The integers \( \lambda_1 ,\dots, \lambda_\ell \) are called \emph{parts},
and the number of parts \( \ell = \ell(\lambda) \) is called the \emph{length}.
If \( \sum_{i=1}^\ell \lambda_i = n \),
we denote \( \lambda \vdash n \) or \( |\lambda| = n \).
The set of all partitions is denoted by \( \Par \). The \emph{Young diagram} is one way to illustrate a partition.
For a partition \( \lambda \), a Young diagram is defined by
\[
  \dg(\lambda) = \{(i, j) : 1 \le j \le \ell, 1 \le i \le \lambda_j \}.
\]
Each element in a Young diagram is called a \emph{cell}. A \emph{composition} is a sequence \( \alpha = (\alpha_1 ,\dots, \alpha_\ell) \)
of positive integers of finite length.
If \( \sum_{i=1}^\ell \alpha_i = n \),
we denote \( \alpha \vDash n  \) or \( |\alpha| = n \).

Similar to the Young diagram of a partition,
a \emph{column diagram} of a composition is defined by
\[
  \dg'(\alpha) = \{(i, j) : 1 \le i \le \ell, 1 \le j \le \alpha_i \}.
\]
For a given partition $\lambda$, we say that a cell $u$ is \emph{addable} if $\lambda\cup\{u\}$ is also a partition. We denote the set of addable cells of $\lambda$ by $\Add(\lambda)$.
For a cell \( u = (i, j)\) in a Young diagram \( \dg(\lambda) \)
of a partition \( \lambda \),
the arm length \( a(u) \) is the number of cells
to the right of the cell \( u \),
and the leg length \( l(u) \) is the number of cells
above the cell \( u \), i.e.,
\[
  a(u) = \left| \{(i', j) \in \dg(\lambda) : i' > i\} \right|, \quad
  l(u) = \left| \{(i, j') \in \dg(\lambda) : j' > j \} \right|.
\]

Let $\alpha$ be a composition. For a cell \( u = (i, j) \) in a column diagram \( \dg'(\alpha) \),
the leg length \( l_\alpha(u) \) is defined by
\[
  l_\alpha(u) = \left| \{(i, j') \in \dg'(\alpha) : j' > j \} \right|,
\]
and if \( j > 1 \), the arm length \( a_\alpha(u) \) is defined by
\[
  a_\alpha(u) = \left| \{ (i', j) \in \dg'(\alpha) : i' > i, \alpha_{i'} \le \alpha_i \} \right|
  + \left| \{ (i', j-1) \in \dg'(\alpha) : i' < i, \alpha_{i'} < \alpha_i \} \right|.
\]

We note that our definition of leg length and arm length is different from \cite{HHL08}.
By rearranging columns in reverse order,
we can recover their definition.
We also caution that it is necessary to mention
whether a sequence is a partition or a composition,
because the definitions of arm lengths and leg lengths for a composition
do not generalize those for a partition.

A \emph{standard Young tableau} of a partition $\lambda$
is a bijective function $\tau:\dg(\lambda)\rightarrow\{1,2,\dots,|\lambda|\}$
satisfying \( \tau(i, j) < \tau(i+1, j) \) and
\( \tau(i, j) < \tau(i, j+1) \).
The set of standard Young tableaux of size \( n \)
is denoted by \( \SYT(n) \).
For a standard Young tableau \( \tau \) and an integer \( i \),
we define \( \tau_{\le i} \) to be a standard Young tableau
given by the entries not greater than \( i \).

A \emph{filling} of the column diagram $\dg'(\alpha)$ of the composition \( \alpha \) is a function
\[
  \sigma: \dg'(\alpha) \rightarrow \ZZ_{>0}.
\]

For each cell \( u = (i, j) \),
we define the \emph{character} of the cell by
\( q^{i-1}t^{j-1} \).
The character of a partition \( \lambda \) is the product
of the characters of cells in \( \lambda \) and is denoted by \( T_{\lambda} \).
We will often refer to a cell in the partition \( \lambda \) by its character.

For a column diagram of a composition we define the \emph{reading order}.
We read the cells row by row, from the top to the bottom,
and in each row from the left to the right,
skipping all the blanks.

\begin{figure}
  \centering
  \begin{tikzpicture}[scale = 0.95]
    \begin{scope}
      \draw (0, 0) rectangle +(1, 1);
      \draw (3, 0) rectangle +(1, 1);
      \node at (0.5, 0.5) {\(u_i\)};
      \node at (3.5, 0.5) {\(u_j\)};
      \node at (2, -0.45) {\small same row};
    \end{scope}
    \begin{scope}[shift = {(6, 0)}]
      \draw (0, 0) rectangle +(1, 1);
      \draw (3, 1) rectangle +(1, 1);
      \node at (0.5, 0.5) {\(u_j\)};
      \node at (3.5, 1.5) {\(u_i\)};
      \node at (2, -0.45) {\small consecutive rows};
    \end{scope}
  \end{tikzpicture}
  \caption{Attacking pairs in reading order.}
  \label{fig:attacking_pair}
\end{figure}

Let \( u_i \) and \( u_j \) be the \(i\)th cell and the \(j\)th cell
in reading order, with \( i < j \).
Then \( (u_i, u_j) \) is said to be an \emph{attacking pair}
if they are either in the same row,
or they are in consecutive rows and \( u_i \) is to the right of \( u_j \).
See \Cref{fig:attacking_pair} for an illustration of attacking pairs.

For a filling \( \sigma \) of a column diagram \( \dg'(\alpha) \), the \emph{inversion set} \( \Inv_\alpha(\sigma) \) is the set of attacking pairs
\( (u, v) \) such that \( \sigma(u) > \sigma(v) \). The \emph{descent set} \( \Des_\alpha(\sigma) \) is the set of cells
whose entry is strictly greater than the entry of the cell directly below.

\subsection{Symmetric functions}
We refer to \cite{Sta01} and \cite{Mac88} for
the detailed definitions of symmetric functions.
Denote the space of symmetric functions over base ring \( \mathbb{Q}(q, t) \) by \( \Lambda \).
A \emph{monomial symmetric function} \( m_{\lambda} \) is defined to be
\[
  m_{\lambda} = \sum_{\alpha} x^\alpha,
\]
where the sum ranges over all (weak) compositions permuting the partition \( \lambda \).
Here and throughout the paper we use the notation
\( x^\alpha = x_1^{\alpha_1}x_2^{\alpha_2} \cdots \).
A \emph{complete homogeneous symmetric function} \( h_{\lambda} \) is defined to be
\begin{align*}
  h_n &= \sum_{\lambda \vdash n} m_{\lambda} = \sum_{i_1 \le \cdots
        \le i_n} x_{i_1} \cdots x_{i_n}, \quad h_0 = 1, \\
  h_{\lambda} &= h_{\lambda_1} \cdots h_{\lambda_\ell}.
\end{align*}
A \emph{power sum symmetric function} \( p_{\lambda} \) is defined to be
\begin{align*}
  p_n &= m_{(n)} = \sum_{i \ge 1} x_i^n, \quad p_0 = 1, \\
  p_{\lambda} &= p_{\lambda_1} \cdots p_{\lambda_\ell}.
\end{align*}
Each of the sets \( \{m_\lambda\}_{\lambda \in \Par} \), \( \{h_\lambda\}_{\lambda \in \Par} \),
and \( \{p_\lambda\}_{\lambda \in \Par} \) forms a basis for \( \Lambda \).

We use brackets to denote \emph{plethystic substitution}. For a symmetric function
\( f \), written as a polynomial in the power sum symmetric function \( p_r \),
\( f[A] \) is obtained by replacing each \( p_r \) with \( p_r[A] \).
Writing \( X = x_1 + x_2 + \cdots \), we have
\[
  p_r[X+Y] = p_r[X] + p_r[Y],
  \qquad
  p_r[qX] = q^r p_r[X].
\]
Thus
\[
  p_r\left[\frac{1}{1-q}X\right]
  = p_r[X+qX+q^2X+\cdots]
  = \sum_{m\geq 0}q^{mr}p_r[X]
  = \frac{p_r[X]}{1-q^r},
\]
which specifies how to compute
\( f\left[\frac{1}{1-q}X\right] \).

The \emph{Macdonald polynomials} \( \{P_{\lambda}(X; q, t)\} \)
are the unique family of symmetric functions determined by the following
\begin{itemize}
\item \( \{P_{\lambda}(X; q, t)\}_{\lambda \in \Par}  \)
  is unitriangular with respect to \( \{m_{\lambda}\}_{\lambda \in \Par} \)
  under the {dominance order}.
\item \( \{P_{\lambda}(X; q, t)\}_{\lambda \in \Par}  \) is an orthogonal basis
  of \( \Lambda \)
  with respect to an inner product defined by
  \[
    \langle p_\lambda, p_\mu \rangle_{q, t}
    = \delta_{\lambda \mu} z_\lambda \prod_{i=1}^\ell \frac{1-q^{\lambda_{i}}}{1 - t^{\lambda_{i}}},
  \]
  where \( z_\lambda = 1^{m_1(\lambda)}m_1(\lambda)!2^{m_2(\lambda)}m_2(\lambda)! \cdots \)
  and  \( m_i(\lambda) \) is the number of \( i \) in the partition \( \lambda \).
\end{itemize}

Since the coefficients of the Macdonald polynomials are rational functions in \( q \) and \( t \),
it is useful to consider scalar multiplication of the Macdonald polynomials,
so that the coefficients are polynomials in \( q \) and \( t \).
The \emph{integral Macdonald polynomial} is defined by
\[
  J_{\lambda}(X; q, t) = \prod_{u \in \lambda} \left(1 - q^{a(u)}t^{l(u)+1}\right)
  P_\lambda(X; q, t).
\]
Finally, the \emph{modified Macdonald polynomial} is defined through plethystic substitution
\[
  \widetilde{H}_{\lambda}(X; q, t) = t^{n(\lambda)}J_{\lambda}\left[\frac{1}{1-t^{-1}}X; q, t^{-1}\right],
\]
where \( n(\lambda) = \sum_{i \ge 1} (i-1)\lambda_i \).

\subsection{Characteristic functions and Carlsson--Mellit algebra}

For given positive integers $k\le n$, a \emph{partial Dyck path} is a lattice path from \( (0,0) \) to \( (k,n) \),
consisting of North and East steps, that does not cross the line \( y=x \).
We denote the set of partial Dyck paths with
\( n \) North steps and \( k \) East steps
by \( \Dyck(n, k) \).
We represent a partial Dyck path by a sequence of
steps \( \pi=(\pi_1,\dots,\pi_{n+k}) \), where each \( \pi_i \) is
either \( + \) or \( - \), denoting a North or East step, respectively.
A partial Dyck path ending at \( (n, n) \) is called a \emph{Dyck path}.
The \emph{corners} of a partial Dyck path \( \pi \) are the cells above \( \pi \)
whose southern and eastern neighbors lie below the path. We denote the
set of corners of \( \pi \) by \( \corner(\pi) \).

For a Dyck path \( \pi \in \Dyck(n, n) \),
the \emph{characteristic function} is defined as follows:
\[
  \chi(\pi) = \sum_{w \in \ZZ^n_{>0}} q^{\inv_\pi(w)} x^w,
\]
where \[ \inv_\pi(w) = \left| \{i < j: w_i > w_j, (i, j) \mbox{ is below } \pi\} \right|. \]
We remark that the definition of the characteristic function \( \chi(\pi) \)
is exactly the same as the definition of a \emph{unicellular LLT polynomial}
corresponding to the Dyck path \( \pi \).

We define a \emph{weighted (partial) Dyck path} as a pair \( (\pi, \wt ) \),
where \( \pi \) is a (partial) Dyck path and
a \emph{weight} \( \wt: \corner(\pi) \rightarrow R \)
is a function for some ring \( R \).
For this paper we only consider weights whose codomain is \( \mathbb{Q}(q, t) \).
We draw a weighted path by drawing a Dyck path \( \pi \)
and writing the weight at the corners.
See \Cref{fig:rearranging} for an example.
We simply skip the weight if the weight is \( 1 \) at the corner.

For \( \pi \in \Dyck(n, n) \) and the weight \( \wt \),
the \emph{weighted characteristic function} of \( (\pi, \wt ) \) is
\[
  \chi(\pi, \wt) = \sum_{w \in \ZZ^n_{>0}}
  q^{\inv_\pi(w)} \left( \prod_{\substack{(i, j) \in \corner(\pi) \\ w_i > w_j}} \wt(i, j) \right) x^w.
\]
Here, we use a different convention from \cite{CM18}.

It is always possible to write a weighted characteristic function
as a linear combination of unweighted characteristic functions.
Let \( \pi \) be a Dyck path
and \( \wt \) be a weight function defined on \( \corner(\pi) \).
For a cell \( c \in \corner(\pi) \), we define
\( \pi^c \) as the Dyck path which goes above the cell \( c \)
and all other steps agree with those of \( \pi \).
Let \( \wt_1 \) be the weight function on \( \pi \) obtained from \( \wt \) by
setting \( \wt_1(c) = 1 \),
and let \( \wt_2 \) be the weight function on \( \pi^c \) that coincides with \( \wt \)
at every corner common to \( \pi \) and \( \pi^c \) and assigns weight \( 1 \) to every other corner.
Then we have a linear relation
\[
  \chi(\pi, \wt) = \frac{q - \wt(c)}{q-1}\chi(\pi, \wt_1) + \frac{-1+\wt(c)}{q-1}\chi(\pi^c, \wt_2).
\]

Generally, suppose that the Dyck path \( \pi \) has \( k \) corners,
\( \corner(\pi) = \{c_1 ,\dots, c_k\} \).
For each subset \( S \subseteq \corner(\pi) \),
let us define \( \pi^S \) to be a path
which goes above the cell \( c_i \) if \( c_i \in S \)
and goes below the cell \( c_i \) if \( c_i \notin S \) and
the remaining steps, which do not involve corners,
are identical to those of \( \pi \).
Let us define the coefficient \( F_S(\wt) \) by
\[
  F_{S}(\wt) = \prod_{c \in S} \frac{-1 + \wt(c)}{q-1}
  \prod_{c \in \corner(\pi) \setminus S} \frac{q - \wt(c)}{q-1}.
\]
Then using these notations we can write the characteristic function
\( \chi(\pi, \wt) \) as follows.
\begin{equation}\label{eq:decomposition_multi}
  \chi(\pi, \wt) = \sum_{S \subseteq \corner(\pi)} F_S(\wt)\chi(\pi^S).
\end{equation}

Carlsson and Mellit defined the algebra
\( \mathbb{A}_{q,t} \) as the quotient of the free algebra
generated by \( d_+,d_+^*,d_-,y_i,z_i,T_i^\pm \) by the relations stated
in \cite[Definition~7.1]{CM18}.
Let \( V_{\bullet} = V_{0} \oplus V_{1} \oplus \cdots \) be the graded vector space,
where \( V_k = \Lambda \otimes_{\mathbb{Q}(q, t)} \mathbb{Q}(q, t)[y_1 ,\dots, y_k] \)
and \( V_0 = \Lambda \).
We describe the action of \(\mathbb A_{q,t}\) on \( V_\bullet \).
The following equations define the actions of \(T_i\),
\(d_+\), \(d_-\), and \(y_i\).

For \( F \in V_k \) and \( 1 \le i < k \), the action of \( T_i \)
on \( V_{\bullet} \) is defined by
\[
  T_i F = \frac{(q-1)y_i F(y_i, y_{i+1}) + (y_{i+1} - qy_i)F(y_{i+1}, y_i)}{y_{i+1}-y_i}.
\]
For \( F \in V_k \) the action of \( d_+ \) is defined by
\[
  d_+F[X] = T_1 \cdots T_k \left( F[X + (q-1)y_{k+1}] \right).
\]
For \( F \in V_{k+1} \), the action of \( d_- \) is defined by
\[
  d_-F[X] = \left (-F[X - (q-1)y_{k+1}] Exp[-y_{k+1}^{-1}X] \right) \Big|_{y_{k+1}^{-1}},
\]
where \( Exp[-y_{k+1}^{-1}X] = \sum_{n \ge 0} h_n[-y_{k+1}^{-1}X] \)
and \( F\Big|_{y_{k+1}^{-1}} \) denotes taking the coefficient of \( y_{k+1}^{-1} \) in \( F \).
The generator \( y_i \) acts on \( V_k \) by multiplication by \( y_i \)
for \( 1 \le i \le k \).
We do not need the actions of the remaining generators here;
see \cite{CM18} for their definitions.

\begin{thm}\cite[Theorem~4.4]{CM18}
  For \( \pi \in \Dyck(n, n) \), let $d_\pi=d_{\pi_{2n}} \cdots d_{\pi_1} $. Then we have
  \[
    \chi(\pi) = d_{{\pi}}(1).
  \]
\end{thm}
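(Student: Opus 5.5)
The plan is to prove $\chi(\pi)=d_\pi(1)$ by induction along the path, working with a generalized object: characteristic functions of partial Dyck paths that live in $V_k$. For $\pi\in\Dyck(n,k)$ with $k$ East steps and $n$ North steps, the path has $n-k$ "open" North steps, namely those whose horizontal row has not yet been closed by an East step. To such a path I would attach an element $\chi(\pi)\in V_{n-k}$. Its coefficient extraction in the $x$-variables should mimic the LLT sum, where each open letter carries a variable $y_i$ encoding the not-yet-determined comparisons with future letters. Concretely, I would define
\[
\chi(\pi)=\sum_{w}q^{\inv_\pi(w)}x^{w_{\mathrm{closed}}}\prod(\text{factors in }y_i\text{ for open positions}),
\]
following Carlsson--Mellit's partial characteristic functions. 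The normalization is chosen so that $\chi(\emptyset)=1$ and, for a full Dyck path, the function agrees with the definition of $\chi(\pi)$ given above.

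The key steps, in order, are as follows.

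\begin{enumerate}
\item Show that appending a North step corresponds to $d_+$. Opening a new letter introduces a variable $y_{k+1}$, and the plethystic shift $X\mapsto X+(q-1)y_{k+1}$ accounts for the new letter. The operators $T_1\cdots T_k$ then move this letter to the front of the ordering of open letters and produce the $q$-counting of inversions with the letters it attacks.
\item Show that appending an East step corresponds to $d_-$. Closing the oldest open letter is implemented by substituting $X-(q-1)y_{k+1}$, multiplying by $Exp[-y_{k+1}^{-1}X]$, and extracting the coefficient of $y_{k+1}^{-1}$. This converts the formal variable into an actual $x$-variable summed over its value.
\item Conclude by induction on the length of $\pi$: $\chi(\pi\cdot\pm)=d_\pm\chi(\pi)$, and hence $\chi(\pi)=d_\pi(1)$.
\end{enumerate}

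The main obstacle is step 1: verifying that the Demazure--Lusztig type operators $T_i$ exactly reproduce the inversion statistic when the newly opened letter is compared with the existing open letters. I would try to reduce this to a two-letter computation. The idea is to check that $T_i$ acting on a monomial-type generating function in $y_i,y_{i+1}$ gives $q$ times the swapped term exactly when an inversion occurs, and then use the braid relations to assemble the product $T_1\cdots T_k$. An alternative route is to take this theorem directly from \cite[Theorem~4.4]{CM18}, as the paper does. Since it is quoted as a known result, the proof here would simply cite Carlsson--Mellit, and the outline above indicates how their argument proceeds.
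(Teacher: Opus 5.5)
Your fallback matches the paper exactly: the statement is quoted from \cite[Theorem~4.4]{CM18} with no proof given, so citing Carlsson--Mellit is the entire argument, and your proposal agrees with it. The attached sketch is only a plan, not a proof, since the partial characteristic function that carries all the content is never defined; moreover, its naive product shape cannot match the normalization of $d_+$ used here, because for $\pi=(+,-,+)$, whose two letters are never compared, one computes $d_+d_-d_+(1)=e_1+(q-1)y_1$, which is not a multiple of $e_1$, so the shift $X\mapsto X+(q-1)(y_1+\cdots+y_k)$ must be built into the definition (equivalently, conjugating by $F\mapsto F[X-(q-1)(y_1+\cdots+y_k)]$ turns $d_+$ into plain $T_1\cdots T_k$ and pushes the interaction between open letters into $d_-$).
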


Motivated by these results, 
we extend the definition as follows. 
\begin{definition}
  For $\pi \in \Dyck(n, k)$,
  and a weight function $\wt:\corner(\pi)\rightarrow \mathbb{Q}(q,t)$,
  we define the operator $d_\pi$ by
  \[
    d_\pi= d_{\pi_{n+k}} \cdots d_{\pi_1}
  \]
  and the (weighted) characteristic function by
  \[
    \chi(\pi) = d_{\pi}(1), \qquad
    \chi(\pi, \wt) = \sum_{S \subseteq \corner(\pi)} F_S(\wt)\chi(\pi^S).
  \]
\end{definition}

\subsection{The I basis}
Define
\[
  U_k = \bigoplus_{n\ge k}
  K_{\mathbb{C}^{*}\times\mathbb{C}^{*}}(\operatorname{PFH}_{n,n-k}),
  \qquad U_\bullet=\bigoplus_{k\ge 0}U_k.
\]
Thus, \( U_k \) is the direct sum of the equivariant \(K\)-theories
of the parabolic flag Hilbert schemes \(\operatorname{PFH}_{n,n-k}\).
Carlsson, Gorsky, and Mellit \cite{CGM20} defined an action of
\( \mathbb{A}_{q,t} \) on \( U_\bullet \) and proved that
\( U_\bullet \) is isomorphic to \( V_\bullet \) as
\( \mathbb{A}_{q,t} \)-modules.

Let us define the set $\FPI$ as
\[
\bigsqcup_{k \ge 0} \{ (\lambda, w) :
  \lambda \in \Par,
  w = (w_1, \dots, w_k) \text{ forms a horizontal strip of } \lambda,
  \lambda \setminus \{w_1, \ldots, w_i\} \in \Par
  \text{ for all } 1 \le i \le k\}.
\]
The space \( U_\bullet \) has a basis \( \{I_{\lambda,w}\} \)
indexed by \( (\lambda,w) \in \FPI \), with each basis element given
by the class of a torus fixed point. Under the isomorphism
\( U_\bullet\cong V_\bullet \), this basis corresponds to a basis of
\( V_\bullet \), which we denote by the same symbols
\( \{I_{\lambda,w}\} \).

We describe how \( d_+ \) and \( d_- \) act on \( I_{\lambda, w} \).
For a partition \( \lambda \) and a cell \( x \in \Add(\lambda) \),
\( \mathcal{R}_{\lambda, x} \) is the set of cells in \( \lambda \)
that are in the same row as \( x \) and \( \mathcal{C}_{\lambda, x} \)
is the set of cells in \( \lambda \) that are in the same column as \( x \).
Then we define
\[
  d_{\lambda, x} = \prod_{u \in \mathcal{R}_{\lambda, x}}
  \frac{q^{a(u)} - t^{l(u) + 1}}{q^{a(u) + 1} - t^{l(u) + 1}}
  \prod_{u \in \mathcal{C}_{\lambda, x}}
  \frac{q^{a(u)+1} - t^{l(u)}}{q^{a(u) + 1} - t^{l(u) + 1}}.
\]
We remark that \( d_{\lambda, x} \) is the coefficient of \( \widetilde{H}_{\lambda + x}(X; q, t) \)
in the modified Macdonald expansion of \( e_1(X)\widetilde{H}_{\lambda}(X; q, t) \)
\cite{GT96}.

The following proposition describes how \( d_+ \) and \( d_- \) act on
\( I_{\lambda, w} \).

\begin{proposition} \cite[Lemma~4.2,~Example~4.3]{CGM20} \label{prop:I_basis}
  We have
  \begin{align} 
    d_+ I_{\lambda, w}
    &= {-q^k}\sum_{x \in \Add(\lambda)} x d_{\lambda, x}
      \left( \prod_{i=1}^k\frac{x-tw_i}{x-qtw_i} \right) I_{\lambda + x, \{x\} \cup w},
      \label{eq:I_basis_plus} \\
    d_- I_{\lambda, w \cup \{x\}}
    &= I_{\lambda, w}, \label{eq:I_basis_minus} \\
    I_{\lambda, \emptyset}
    &=  \frac{(-1)^{|\lambda|}}{T_\lambda} \widetilde{H}_\lambda(X; q, t). \label{eq:I_basis0}
  \end{align}
  Here, for a sequence \( w = (w_1 ,\dots, w_k) \),
  \( \{x\} \cup w \) denotes \( (x, w_1 ,\dots, w_k) \) and
  \( w \cup \{x\} \) denotes \( (w_1 ,\dots, w_k, x) \).
\end{proposition}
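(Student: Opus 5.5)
We do not reprove this from scratch; we follow the strategy of \cite{CGM20}, which is equivariant localization on the parabolic flag Hilbert schemes. Under the isomorphism $U_\bullet\cong V_\bullet$, the generators are realized geometrically. The operator $d_-$ is (up to the normalization fixed in \cite{CGM20}) the pushforward along the forgetful map $\operatorname{PFH}_{n,n-k-1}\to\operatorname{PFH}_{n,n-k}$, which drops the last step of the flag. The operator $d_+$ is the convolution along the correspondence $\operatorname{PFH}_{n,n-k}\leftarrow Z\to\operatorname{PFH}_{n+1,n-k}$, twisted by the tautological line bundle $\mathcal{L}$ whose fiber is the added box. The torus $\mathbb{C}^*\times\mathbb{C}^*$ has isolated fixed points indexed by $\FPI$. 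We take $I_{\lambda,w}$ to be the fixed point classes, normalized as in \cite{CGM20}: skyscraper classes divided by the appropriate $K$-theoretic Euler factors. With this normalization, pushforward and convolution become diagonal-type formulas with explicit rational coefficients.

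\textbf{Step 1: \eqref{eq:I_basis_minus}.} The fixed point $(\lambda,w\cup\{x\})$ maps under the forgetful map to $(\lambda,w)$, and it is the only fixed point of $\operatorname{PFH}_{n,n-k-1}$ lying over it. The normalization of the $I$-classes is chosen precisely so that the Euler class of the relative tangent direction cancels. Localization then gives $d_-I_{\lambda,w\cup\{x\}}=I_{\lambda,w}$ with coefficient exactly $1$.

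\textbf{Step 2: \eqref{eq:I_basis_plus}.} The fixed points of $Z$ over $(\lambda,w)$ are exactly the pairs obtained by adding an addable cell $x\in\Add(\lambda)$. Each maps to $(\lambda+x,\{x\}\cup w)$. At such a point:
\begin{itemize}
\item the fiber of $\mathcal{L}$ is the character $x$, which accounts for the factor $x$;
\item the overall sign and the power $q^k$ come from the normalization of $d_+$ relative to $T_1\cdots T_k$ and the shift $y_{k+1}$.
\end{itemize}
The remaining coefficient is a ratio of tangent characters. It equals $\Lambda^\bullet$ of $T_{(\lambda+x,\{x\}\cup w)}-T_Z-T_{(\lambda,w)}$, evaluated as a ratio of products of $(1-\text{weight})$ factors. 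We split this virtual character into two pieces:
\begin{itemize}
\item the Hilbert scheme part, which reproduces the Garsia--Tesler Pieri coefficient $d_{\lambda,x}$ through the usual arm/leg telescoping over $\mathcal{R}_{\lambda,x}$ and $\mathcal{C}_{\lambda,x}$;
\item the flag part, which contributes one factor $\frac{x-tw_i}{x-qtw_i}$ for each previously removed box $w_i$.
\end{itemize}

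\textbf{Step 3: \eqref{eq:I_basis0}.} For $k=0$, $\operatorname{PFH}_{n,n}=\operatorname{Hilb}^n(\mathbb{C}^2)$. Haiman's identification sends the fixed point class $I_\lambda$ to a scalar multiple of $\widetilde{H}_\lambda$. To fix the scalar, we compare the action of $d_-d_+=$ (multiplication by a multiple of $e_1$) with the Pieri rule for $\widetilde{H}_\lambda$. Iterating from $I_{\emptyset,\emptyset}=1$ yields the factor $(-1)^{|\lambda|}T_\lambda^{-1}$, since each added box contributes $-x$ in \eqref{eq:I_basis_plus}.

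\textbf{Main obstacle.} The hardest step is the tangent-weight computation in Step 2. This requires the explicit $T$-character of the tangent space to $\operatorname{PFH}_{n,n-k}$ at $(\lambda,w)$, which is the main technical input of \cite{CGM20}. From it one must verify the telescoping that isolates $d_{\lambda,x}$ and the factors $\prod_i\frac{x-tw_i}{x-qtw_i}$. We would first compute the character via the deformation complex of the flag of ideals, and then check the identity on small cases, $k=0,1$, to pin down the sign and the $q^k$ normalization.
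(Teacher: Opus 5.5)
The paper gives no proof of this proposition. It imports it verbatim from \cite[Lemma~4.2, Example~4.3]{CGM20}, so deferring to the Carlsson--Gorsky--Mellit localization computation is the same route. Your Step~3 is sound. On $V_0$ one has exactly $d_-d_+=e_1$, so \eqref{eq:I_basis_plus} and \eqref{eq:I_basis_minus} give $e_1I_{\lambda,\emptyset}=-\sum_{x}x\,d_{\lambda,x}I_{\lambda+x,\emptyset}$. Writing $I_{\lambda,\emptyset}=c_\lambda\widetilde H_\lambda$ and comparing with $e_1\widetilde H_\lambda=\sum_x d_{\lambda,x}\widetilde H_{\lambda+x}$ forces $c_{\lambda+x}=-c_\lambda/x$, hence $c_\lambda=(-1)^{|\lambda|}/T_\lambda$. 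This uses, as input from \cite{CGM20}, that $U_0\cong V_0$ sends fixed point classes to multiples of $\widetilde H_\lambda$ and $I_{\emptyset,\emptyset}$ to $1$; the $e_1$-relations by themselves do not determine the $I_{\lambda,\emptyset}$.

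What you add beyond the citation, however, is wrong in several places.

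\textbf{Step~1.} $(\lambda,w\cup\{x\})$ is in general not the only fixed point over $(\lambda,w)$: for $\lambda=(2,1)$ both $(\lambda,(q))$ and $(\lambda,(t))$ map to $(\lambda,\emptyset)$. Moreover, if the $I_{\lambda,w}$ were skyscrapers divided by Euler factors, $\pi_*$ would produce the ratio $\Lambda_{-1}(T^\vee_{\pi(p)})/\Lambda_{-1}(T^\vee_p)$, which is not $1$. Already for $\operatorname{PFH}_{1,0}\hookrightarrow\operatorname{PFH}_{1,1}=\mathbb{C}^2$, a line in the plane, this ratio is the Euler factor of the normal direction. The coefficient $1$ holds because $I_{\lambda,w}$ is the plain class of the fixed point, as the paper says, and proper pushforward sends the structure sheaf of a fixed point to that of its image. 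No uniqueness and no cancellation is involved.

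\textbf{Step~2.} First, only those $x\in\Add(\lambda)$ not directly above some $w_i$ give fixed points; this is the condition $yI_{n-k}\subset I_{n+1}$. The remaining terms of \eqref{eq:I_basis_plus} are killed by the factor $x-tw_i$, which you should note. Second, for plain fixed point classes and smooth $Z$, the localization coefficient of $r_*p^*$ along $\operatorname{PFH}_{n,n-k}\xleftarrow{p}Z\xrightarrow{r}\operatorname{PFH}_{n+1,n-k}$ is $\Lambda_{-1}(T^\vee_{(\lambda,w)})/\Lambda_{-1}(T^\vee_zZ)$, up to the twist and sign built into $d_+$. It is not $\Lambda^\bullet$ of your three-term virtual character, and the tangent space at the target point does not enter separately. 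Third, you cannot read off $-q^k$ from $T_1\cdots T_k$ and the substitution $X\mapsto X+(q-1)y_{k+1}$. Those describe $d_+$ on $V_\bullet$, while for $k>0$ the $I_{\lambda,w}$ are known only as classes in $U_\bullet$. Their expression in $V_k$ is exactly what the paper's main theorem computes, using this proposition, so that route would be circular. Every factor, including the sign and $q^k$, must come out of the geometric definition of $d_+$ and the tangent weights.

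You do not compute the torus characters of the tangent spaces of $\operatorname{PFH}_{n,n-k}$ at fixed points, nor the telescoping to $d_{\lambda,x}\prod_i\frac{x-tw_i}{x-qtw_i}$. So the heart of Step~2 remains, exactly as in the paper, a citation of \cite[Lemma~4.2]{CGM20} rather than an argument.
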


\section{Haglund--Haiman--Loehr formula for
  \texorpdfstring{$I_{\mu,w}$}{I(mu,w)}}
\label{sec:HHL_for_I_mu_w}
To state our main theorem, we briefly review the paper \cite{HHL08}.
\subsection{Haglund--Haiman--Loehr formula} \label{sec:HHL} 
For a composition $\alpha$, define the function \( D_\alpha\) as follows:
\begin{equation} \label{eq:HHL_D}
  D_\alpha(X; q, t)
  = \sum_{\sigma:\dg'(\alpha)\rightarrow\mathbb{Z}_{>0}}
  q^{\inv_\alpha(\sigma)}
  \prod_{u\in\Des_\alpha(\sigma)}
  q^{-a_\alpha(u)}t^{l_\alpha(u)+1}
  x^\sigma,
\end{equation}
where \( \inv_\alpha(\sigma) = |\Inv_\alpha(\sigma)| \).

As discussed in the Introduction, we can express \eqref{eq:HHL_D}
in terms of a characteristic function.
For \( \alpha \vDash n \), define \( \pi_\alpha \in \Dyck(n, n) \)
so that a cell \( (i,j) \) lies below \( \pi_\alpha \) if and only if the
\(i\)th and \(j\)th cells of \( \alpha \), in reading order, form an
attacking pair. By construction, \( (i,j) \) is a corner of \( \pi_\alpha \)
if and only if the \(i\)th cell in reading order of \( \alpha \)
lies directly above the \(j\)th cell in reading order.

With this path in place, we regard a filling
\( \sigma: \dg'(\alpha) \rightarrow \ZZ_{>0} \) as a word
\( w \in \ZZ^n_{>0} \) by reading its entries in reading order.
The construction of \( \pi_\alpha \) encodes the statistics of the filling:
the descent set \( \Des_\alpha(\sigma) \) corresponds to the inversions
\(w_i>w_j\) for \( (i,j)\in\corner(\pi_\alpha) \), while the inversion set
\( \Inv_\alpha(\sigma) \) corresponds to the inversions \(w_i>w_j\) for cells
\( (i,j) \) lying below \( \pi_\alpha \). For a cell \(u\in\dg'(\alpha)\) corresponds to a corner $(i,j)\in\corner(\pi_\alpha)$, define
\( \wt_\alpha(i,j)=q^{-a_\alpha(u)}t^{1+l_\alpha(u)} \). Then
\eqref{eq:HHL_D} is equivalent to
\begin{equation} \label{eq:HHL_char_fn}
  D_\alpha(X; q, t)
  = \sum_{w \in \ZZ^n_{>0}} q^{\inv_{\pi_\alpha}(w)}
  \left( \prod_{\substack{(i, j) \in \corner(\pi_\alpha) \\ w_i > w_j}} \wt_\alpha(i, j) \right)x^w
  = \chi(\pi_\alpha, \wt_\alpha).
\end{equation}

Now we state the equivalent statement of the Haglund--Haiman--Loehr formula.
\begin{thm}\cite[Theorem~5.1.1]{HHL08} \label{thm:HHL}
  We have
  \[
    \widetilde{H}_\mu(X; q, t)
    = \chi(\pi_\alpha, \wt_\alpha),
  \]
  where \( \alpha \) is any rearrangement of the parts of the partition \( \mu' \).
\end{thm}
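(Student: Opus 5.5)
We plan to prove \Cref{thm:HHL} through the Carlsson--Mellit action rather than by HHL's original axiomatic characterization, because the same argument should later extend to all $I_{\mu,w}$. The goal is to show $\chi(\pi_\alpha,\wt_\alpha)=(-1)^{n}T_\mu I_{\mu,\emptyset}$. By \eqref{eq:I_basis0} this is the same as $\widetilde H_\mu$.

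\textbf{Step 1: reduce to unweighted paths and expand in the $I$ basis.} Apply \eqref{eq:decomposition_multi} to write
\[
\chi(\pi_\alpha,\wt_\alpha)=\sum_{S\subseteq\corner(\pi_\alpha)}F_S(\wt_\alpha)\,d_{\pi_\alpha^S}(1),
\]
using $1=I_{\emptyset,\emptyset}$. Each word $d_{\pi^S_\alpha}$ is then evaluated with \Cref{prop:I_basis}. A $d_+$ adds an addable cell $x$ to $\lambda$ and prepends it to $w$, contributing the factor
\[
-q^k x\, d_{\lambda,x}\prod_i\frac{x-tw_i}{x-qtw_i}.
\]
A $d_-$ deletes the last entry of $w$ with coefficient $1$. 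Hence $d_{\pi^S_\alpha}(1)=\sum_{\tau} c_{\pi^S_\alpha}(\tau)\,I_{\sh(\tau),\emptyset}$, summed over standard Young tableaux $\tau$ of size $n$. Here $\tau$ records the order in which cells are added, and $c_{\pi}(\tau)$ is the product of the local factors. The intermediate strips $w$ are determined by $\tau$ together with the positions of the $d_-$ steps in $\pi$. Swapping the sum over $S$ with the sum over $\tau$ gives $\chi(\pi_\alpha,\wt_\alpha)=\sum_\tau\bigl(\sum_S F_S(\wt_\alpha)c_{\pi_\alpha^S}(\tau)\bigr)I_{\sh(\tau),\emptyset}$.

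\textbf{Step 2: factor the inner sum over $S$.} Toggling a corner $c$ only swaps one adjacent pair $-+$ into $+-$ in the word. In one order the newly added cell $x$ still sees a strip entry $w_j$, and in the other order that entry has already been removed. So the two terms for $c$ differ by an explicit factor: a power of $q$ times $(x-tw_j)/(x-qtw_j)$, where $w_j$ is the cell deleted by that $d_-$. Because the corners of $\pi_\alpha$ act on disjoint step pairs, the sum over $S$ should factor as $c_{\pi_\alpha}(\tau)$ times a product over corners of local factors of the form
\[
\frac{(q-\wt(c))+(\wt(c)-1)\,q^{\pm1}\frac{x-tw_j}{x-qtw_j}}{q-1}.
\]
Establishing this factorization cleanly, including how the $q^k$ exponents shift under the swap, is the first technical point.

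\textbf{Step 3: vanishing and uniqueness (main obstacle).} With $\wt(c)=q^{-a_\alpha(u)}t^{l_\alpha(u)+1}$, the local factor should vanish exactly when the character ratio $x/w_j$ takes a specific value. That value should be the one forced whenever $x$ is not the cell that $\alpha$'s column arrangement places directly above the deleted cell in a diagram of shape $\mu$. Two things must be checked. First, combined with the factors $(x-tw_i)$ already present in $c_\pi(\tau)$, these zeros should kill every tableau except one, $\tau^*$, of shape $\mu$, obtained by filling $\mu$ in the reading order of $\dg'(\alpha)$ read transposed. Second, the zeros must match $q^{-a_\alpha(u)}$ for the nonstandard arm of compositions, in particular the second term of $a_\alpha(u)$ that looks at row $j-1$. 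I expect this matching to be the hardest part. I would first verify it for two-column $\alpha$ and then induct on the number of columns, reading off which addable cells $x$ keep nonzero factors.

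\textbf{Step 4: telescoping.} It remains to compute the coefficient of $\tau^*$ and show it equals $(-1)^nT_\mu$. The factor $(-1)^n$ comes from the $n$ signs $-q^k$. The product of $x\,d_{\lambda,x}$ should telescope row by row and column by column against the surviving local factors, leaving the product of characters $T_\mu$ with all powers of $q$ cancelling. Independence of the rearrangement $\alpha$ then follows automatically, since the answer $(-1)^nT_\mu I_{\mu,\emptyset}$ does not depend on $\alpha$.
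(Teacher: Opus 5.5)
Your outline matches the paper's proof step for step: expand via \eqref{eq:decomposition_multi}; write each $d_{\pi^S}(1)$ as a sum over standard Young tableaux via \Cref{prop:I_basis} (this is \Cref{lem:c_pi_T}, where the $-q^k$ is absorbed into the factors $\frac{qx-qtw}{x-qtw}$, so the exponent in your local factor is $+1$); factor the $S$-sum corner by corner (\Cref{lem:factorization}); show every tableau but one $\tau^*$ is killed; and telescope that coefficient to $(-1)^nT_\mu$ one North step at a time, the only difference being that the paper runs this for the partial path of a maximal strip $\bar w$ and then applies $d_-^{\mu_1}$. For your Step~3, note that $\tau^*$ is the reading-order labelling of $\dg'(\alpha)$ with each column turned upside down and the columns sorted into the shape $\mu$ (so the $i$th cell $u$ lands in row $l_\alpha(u)+1$), and the arm matching needs no induction on columns: $a_\alpha(u)$ is exactly the number of cells strictly between $u$ and the cell below it in reading order with leg at most $l_\alpha(u)$, i.e.\ the number of columns of $\tau^*_{\le j-1}$ to the right of $\xchar{\tau^*}{i}$, which is what makes the corner factor vanish precisely on the first-row addable cell.
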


\subsection{Statement of main theorem}\label{subsec:statement_of_main_theorem}

In this subsection, we state our main theorem,
which is the \( I_{\mu, w} \) analogue of \Cref{thm:HHL}.
To do so, we define a weighted partial Dyck path corresponding to
a pair \( (\mu, w) \).

First we define the partial Dyck path \( \pi_{\mu, w} \) and
the weight \( \wt_{\mu, w} \) when \( w \) is maximal, that is, \( |w| = \mu_1 \).
We first reorder the columns of
\( \dg(\mu) \) so that the left-to-right order of the uppermost cells
agrees with the reverse order of the cells in \(w\). The resulting diagram
is a column diagram \( \dg'(\alpha) \) for some composition \( \alpha \), which
we denote by \( \alpha(\mu,w) \). Let \( \pi_{\alpha(\mu,w)} \) be the Dyck path associated to the composition $\alpha(\mu,w)$ constructed as in Section~\ref{sec:HHL}. 
We then define the partial Dyck path \( \pi_{\mu,w} \) by removing the last
\(|w|\) East steps from \( \pi_{\alpha(\mu,w)} \). Similarly, we set
\( \wt_{\mu,w}=\wt_{\alpha(\mu,w)} \).

If \(w\) is not maximal, that is, if \( |w|<\mu_1 \), we can extend \( w \)
to a maximal horizontal strip \( \bar{w} \)
by adding \( \mu_1-|w| \) cells to the end of \(w\).
The weighted partial Dyck path \( (\pi_{\mu,w},\wt_{\mu,w}) \) is then obtained by
appending \( \mu_1-|w| \) East steps to the end of
\( (\pi_{\mu,\bar{w}},\wt_{\mu,\bar{w}}) \), retaining all corner weights.
The extension \( \bar{w} \) may be chosen arbitrarily,
provided that \( (\mu, \bar{w}) \in \FPI \).

Now we state the main theorem.
\begin{thm} \label{thm:main}
  Let \( (\pi_{\mu, w}, \wt_{\mu, w}) \) be the weighted partial Dyck path
  corresponding to \( (\mu, w) \).
  Then we have
  \[
    I_{\mu, w} = \frac{(-1)^{|\mu|}}{T_\mu}\chi( \pi_{\mu, w}, \wt_{\mu, w}).
  \]
  In particular, \Cref{thm:HHL} follows.
\end{thm}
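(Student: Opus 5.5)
We reduce the non-maximal case to the maximal one and then evaluate the Carlsson--Mellit operator word directly in the fixed point basis. If $|w|<\mu_1$, then $(\pi_{\mu,w},\wt_{\mu,w})$ is $(\pi_{\mu,\bar w},\wt_{\mu,\bar w})$ followed by $\mu_1-|w|$ East steps, and these steps create no new corners. Hence every $\pi^S$ for $\pi_{\mu,w}$ is $\pi^S$ for $\pi_{\mu,\bar w}$ followed by the same East steps, and
\[
\chi(\pi_{\mu,w},\wt_{\mu,w})=d_-^{\mu_1-|w|}\chi(\pi_{\mu,\bar w},\wt_{\mu,\bar w}).
\]
By \eqref{eq:I_basis_minus}, $d_-^{\mu_1-|w|}I_{\mu,\bar w}=I_{\mu,w}$, since $d_-$ deletes the last entries of $\bar w$, which are exactly the appended cells. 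So it suffices to treat maximal $w$. The final claim about \Cref{thm:HHL} then follows by taking $w=\emptyset$, which gives the full Dyck path $\pi_{\alpha(\mu,\bar w)}$, together with \eqref{eq:I_basis0}. We expect arbitrary rearrangements of columns to be reachable in this way, or else to follow from the independence of $\alpha$ already contained in \cite{HHL08}.

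For maximal $w$, we first expand the weighted function using \eqref{eq:decomposition_multi}:
\[
\chi(\pi_{\mu,w},\wt_{\mu,w})=\sum_S F_S(\wt)\,d_{\pi^S}(1).
\]
Each word $d_{\pi^S}$ is then evaluated in the fixed point basis, starting from $1=I_{\emptyset,\emptyset}$. Iterating \eqref{eq:I_basis_plus} and \eqref{eq:I_basis_minus}, a sequence of $d_+$'s adds cells one at a time, which traces a standard Young tableau $\tau$. Each $d_-$ forgets the last added cell still recorded in the strip. So
\[
d_{\pi}(1)=\sum_\tau c_\pi(\tau)\,I_{\sh(\tau),w(\tau)},
\]
where $c_\pi(\tau)$ is a product over the $d_+$ steps of factors $-q^{k}\,x\,d_{\lambda,x}\prod_i (x-tw_i)/(x-qtw_i)$. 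Here $k$ is the current length of the strip at that step, and the $w_i$ are the currently remembered cells, which are determined by $\pi$. Summing over $S$, the coefficient of a given $\tau$ becomes a sum over $S$ of $F_S\,c_{\pi^S}(\tau)$. The corners toggle independently, and toggling a corner $c$ only swaps one adjacent $d_-d_+$ pair for $d_+d_-$. We therefore expect the sum over $S$ to factor as a product over corners of local two-term expressions, each times the common factors. This factorization is the key lemma.

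We then show that for $\pi=\pi_{\mu,w}$ the only surviving tableau is $\tau^*$, the tableau that fills $\mu$ in reverse reading order of $\alpha(\mu,w)$, with its final strip equal to $w$. Every other $\tau$ should pick up a vanishing factor. Either a factor $x-tw_i$ vanishes, when a cell is added directly above a remembered cell, or a corner's local factor vanishes. The corner weight $q^{-a}t^{l+1}$ is designed so that $q-\wt(c)$ and $-1+\wt(c)$ combine with the corresponding $x/w_i$ ratios to kill the wrong choice of addable cell. Finally, we compute the coefficient of $\tau^*$: the product of the $d_{\lambda,x}$, the local corner factors and the signs should telescope, by cancelling hook factors $q^{a}-t^{l+1}$ between consecutive steps, to $(-1)^{|\mu|}T_\mu$.

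The main obstacle is the vanishing step: showing that every $\tau\neq\tau^*$ is killed requires carefully matching the arm $a_\alpha(u)$, including the second term with row $j-1$, with the characters of the cells present at the moment the corner is processed. We would first check this on two-column and hook shapes to pin down the local identity, and then prove it by induction on the number of columns, adding the rightmost column in reading order.
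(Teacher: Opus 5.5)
Your outline follows the paper's architecture step for step. It reduces to maximal $w$ via $d_-^{\mu_1-|w|}$, uses the expansion \eqref{eq:decomposition_multi}, expands $d_\pi(1)$ over standard tableaux in the $I$ basis (Lemma~\ref{lem:c_pi_T}), factors the sum over $S$ into local corner factors (Lemma~\ref{lem:factorization}), and ends with a unique surviving tableau $\tau^*$ of coefficient $(-1)^{|\mu|}T_\mu$. The reduction and the specialization to \Cref{thm:HHL} are fine. Indeed every rearrangement of $\mu'$ arises, since the only constraint on a maximal $\bar w$ is that tops of equal-height columns are removed right to left. The factorization is also immediate: toggling the corner $(i,j)$ only adds $p=i$ to the range for $j$.

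The gap is that the two steps which actually constitute the proof are only announced (``should'', ``we expect''). These are the vanishing of $\coeff_{\pi,\wt,\tau}$ for $\tau\neq\tau^*$ and the evaluation at $\tau^*$. Two ingredients they rest on are also misstated:
\begin{itemize}
\item Since $d_+$ prepends the new cell to the strip and $d_-$ deletes the last entry, $d_-$ forgets the \emph{oldest} remembered cell, not the most recently added one. This is why the remembered cells at the $j$th North step are exactly $x_\tau(p)$ with $m_j(\pi)<p<j$, the cells attacking the $j$th cell in reading order.
\item $\tau^*$ is not a reverse reading order. It fills each column of $\mu$ from the bottom up, visiting columns in the reading order of $\alpha$, so the $j$th read cell $u$ goes to row $l_\alpha(u)+1$.
\end{itemize}

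With these corrected, the missing argument is direct. Your proposed induction on the number of columns is not needed and fits the operator word badly, because a new column inserts cells into every row of the reading order and changes the whole path. Take the first $j$ with $x_\tau(j)\neq x_{\tau^*}(j)$, and let $u_i$ denote the $i$th cell of $\alpha$ in reading order.

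The remembered cells at step $j$ are all column tops of $\tau^*_{\le j-1}$, except $x_{\tau^*}(i)$ when the $j$th North step follows the $i$th East step. Hence the numerator $q\,x_\tau(j)-qt\,x_\tau(p)$ vanishes for every choice of $x_\tau(j)$ other than two: the first-row addable cell, and, at a corner, $t\,x_{\tau^*}(i)=x_{\tau^*}(j)$. Away from corners $x_{\tau^*}(j)$ is the first-row cell, so $\tau$ dies.

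At a corner, the first-row cell $x$ is killed by the local corner factor. The two terms of $a_\alpha(u_i)$ together count the cells read strictly between $u_i$ and $u_j$ with leg at most $l_\alpha(u_i)$. These are exactly the columns of $\tau^*_{\le j-1}$ to the right of $x_{\tau^*}(i)$, and $x_{\tau^*}(i)$ lies in row $l_\alpha(u_i)+1$. So $\wt(i,j)=qt\,x_{\tau^*}(i)/x$, and the factor becomes $\frac{q-\wt(i,j)}{q-1}-\frac{1-\wt(i,j)}{q-1}\cdot\frac{q-\wt(i,j)}{1-\wt(i,j)}=0$.

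For $\tau^*$, the factors $-x$ give $(-1)^{|\mu|}T_\mu$ outright. The rest equals $1$ \emph{at each step separately}, not through cancellation between consecutive steps:
\begin{itemize}
\item the second term of each corner factor vanishes because $x_{\tau^*}(j)=t\,x_{\tau^*}(i)$;
\item the row factors of $d_{\lambda,x}$ cancel one by one against the remembered cells to the left of $x_{\tau^*}(j)$;
\item the column factors telescope, row block by row block, against the remembered cells to its right, leaving $(q-1)/(q-\wt(i,j))$. The surviving corner factor $(q-\wt(i,j))/(q-1)$ cancels this.
\end{itemize}
Without these identities the proposal is a plan, not a proof.
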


\begin{figure}
  \centering
  \begin{tikzpicture} [scale = 0.8]
    \begin{scope}
      \draw (0, 0) rectangle +(1, 1);
      \draw (0, 1) rectangle +(1, 1);
      \draw (0, 2) rectangle +(1, 1);
      \draw (1, 0) rectangle +(1, 1);
      \draw (1, 1) rectangle +(1, 1);
      \draw (2, 0) rectangle +(1, 1);
      \draw (2, 1) rectangle +(1, 1);
      \draw (3, 0) rectangle +(1, 1);
      \draw (3, 1) rectangle +(1, 1);
      \draw (4, 0) rectangle +(1, 1);
      \draw (5, 0) rectangle +(1, 1);
      \node at (3, -0.5) {\( (\mu, w) \)};
    \end{scope}
    \begin{scope} [shift = {(0.5, 0.5)}]
      \node at (0, 2) {\textcolor{gray}{\( 4 \)}};
      \node at (1, 1) {\textcolor{gray}{\( 6 \)}};
      \node at (2, 1) {\textcolor{gray}{\( 5 \)}};
      \node at (3, 1) {\( 2 \)};
      \node at (4, 0) {\( 3 \)};
      \node at (5, 0) {\( 1 \)};
    \end{scope}
    \begin{scope} [shift = {(12, 0)}]
      \draw (-3, 0) rectangle +(1, 1);
      \draw (-3, 1) rectangle +(1, 1);
      \draw (-3, 2) rectangle +(1, 1);
      \draw (-5, 0) rectangle +(1, 1);
      \draw (-5, 1) rectangle +(1, 1);
      \draw (-4, 0) rectangle +(1, 1);
      \draw (-4, 1) rectangle +(1, 1);
      \draw (-1, 0) rectangle +(1, 1);
      \draw (-1, 1) rectangle +(1, 1);
      \draw (-2, 0) rectangle +(1, 1);
      \draw (-0, 0) rectangle +(1, 1);
      \node at (-2, -0.5) {\( (\wt_\alpha) \)};
    \end{scope}
    \begin{scope} [shift = {(12.5, 0.5)}]
      \node at (-1, 1) {\footnotesize \( q^{-1}t \)};
      \node at (-3, 1) {\footnotesize \( q^{-3}t^2 \)};
      \node at (-3, 2) {\footnotesize \( q^{-2}t \)};
      \node at (-4, 1) {\footnotesize \( q^{-1}t \)};
      \node at (-5, 1) {\footnotesize \( q^{-2}t \)};
      \node at (-0, -2) {};
    \end{scope}
    \begin{scope} [shift = {(19, 0)}]
      \draw (-3, 0) rectangle +(1, 1);
      \draw (-3, 1) rectangle +(1, 1);
      \draw (-3, 2) rectangle +(1, 1);
      \draw (-5, 0) rectangle +(1, 1);
      \draw (-5, 1) rectangle +(1, 1);
      \draw (-4, 0) rectangle +(1, 1);
      \draw (-4, 1) rectangle +(1, 1);
      \draw (-1, 0) rectangle +(1, 1);
      \draw (-1, 1) rectangle +(1, 1);
      \draw (-2, 0) rectangle +(1, 1);
      \draw (-0, 0) rectangle +(1, 1);
      \node at (-2, -0.5) {Reading order of \( \mu \)};
    \end{scope}
    \begin{scope} [shift = {(19.5, 0.5)}]
      \node at (-3, 2) {\( 1 \)};
      \node at (-5, 1) {\( 2 \)};
      \node at (-4, 1) {\( 3 \)};
      \node at (-3, 1) {\( 4 \)};
      \node at (-1, 1) {\( 5 \)};
      \node at (-5, 0) {\( 6 \)};
      \node at (-4, 0) {\( 7 \)};
      \node at (-3, 0) {\( 8 \)};
      \node at (-2, 0) {\( 9 \)};
      \node at (-1, 0) {\( 10 \)};
      \node at (-0, 0) {\( 11 \)};
    \end{scope}
    \begin{scope} [shift = {(7, -5)}]
      \draw (0, 0) rectangle +(1, 1);
      \draw (0, 1) rectangle +(1, 1);
      \draw (0, 2) rectangle +(1, 1);
      \draw (1, 0) rectangle +(1, 1);
      \draw (1, 1) rectangle +(1, 1);
      \draw (2, 0) rectangle +(1, 1);
      \draw (2, 1) rectangle +(1, 1);
      \draw (3, 0) rectangle +(1, 1);
      \draw (3, 1) rectangle +(1, 1);
      \draw (4, 0) rectangle +(1, 1);
      \draw (5, 0) rectangle +(1, 1);
      \node at (3, -0.5) {\( \tau^*(\mu, w) \)};
      \node at (3, -1.5) {};
    \end{scope}
    \begin{scope} [shift = {(7.5, -4.5)}]
      \node at (0, 0) {\( 1 \)};
      \node at (1, 0) {\( 2 \)};
      \node at (2, 0) {\( 3 \)};
      \node at (0, 1) {\( 4 \)};
      \node at (3, 0) {\( 5 \)};
      \node at (1, 1) {\( 6 \)};
      \node at (2, 1) {\( 7 \)};
      \node at (0, 2) {\( 8 \)};
      \node at (4, 0) {\( 9 \)};
      \node at (3, 1) {\( 10 \)};
      \node at (5, 0) {\( 11 \)};
    \end{scope}
  \end{tikzpicture}
  \begin{tikzpicture}
    \begin{scope} [shift = {(0, -3)}, scale = 0.8]
      \foreach \i in {0 ,..., 11}
      {
        \draw[lightgray] (0, \i) -- (11, \i);
        \draw[lightgray] (\i, 0) -- (\i, 11);
      }
      \foreach \i in {1 ,..., 11}
      {
        \node at (-0.5 + \i, -0.5 + \i) {\( \i \)};
      }
      \draw[red, thick] (0, 0) -- (0, 3) -- (1, 3) -- (1, 5) -- (2, 5) -- (2, 6)
      -- (3, 6) -- (3, 7) -- (4, 7) -- (4, 9) -- (5, 9) -- (5, 11) -- (8, 11);

      \node[font=\footnotesize] at (4.5, 9.5) {\( q^{-1}t \)};
      \node[font=\footnotesize] at (3.5, 7.5) {\( q^{-3}t^2 \)};
      \node[font=\footnotesize] at (2.5, 6.5) {\( q^{-1}t \)};
      \node[font=\footnotesize] at (1.5, 5.5) {\( q^{-2}t \)};
      \node[font=\footnotesize] at (0.5, 3.5) {\( q^{-2}t \)};
    \end{scope}
  \end{tikzpicture}
  \caption{An example of constructing \( \pi, \wt \) for the case \( \mu = (6, 4, 1) \) and \( w = (q^5, q^3t, q^4) \).}
  \label{fig:rearranging}
\end{figure}

\section{Main proof}\label{sec:proof}
\subsection{SYT decomposition}
For a given fixed point index $(\mu,w)$,
let $(\pi_{\mu,w},\wt_{\mu,w})$ be the weighted partial Dyck path
defined in Section~\ref{subsec:statement_of_main_theorem}.
We define \( \coeff_{(\mu, w), (\lambda, v)} \) as the coefficients of
\( I_{\lambda, v} \) in the expansion of \( \chi(\pi_{\mu, w}, \wt_{\mu, w}) \),
that is,
\begin{equation}\label{eq:I_expansion}
  \chi(\pi_{\mu,w}, \wt_{\mu,w}) = \sum_{(\lambda, v)} \coeff_{(\mu,w),(\lambda,v)}I_{\lambda, v}.
\end{equation}

To prove Theorem~\ref{thm:main}, it suffices to show that
\begin{equation} \label{eq:claim}
  \coeff_{(\mu,w),(\lambda,v)}=
  \begin{cases}
    (-1)^{|\mu|}T_\mu &\text{if} \quad(\mu,w)=(\lambda,v)\\
    0&\text{otherwise}
  \end{cases}    
\end{equation}
To this end, we first refine the coefficients. 

By Proposition~\ref{prop:I_basis},
each time the operator \(d_+\) is applied to \( I_{\lambda, w} \),
the size of the index \( \lambda \) increases by exactly one cell.
Hence, the growth of the partition can be encoded by a standard Young tableau.
For a standard Young tableau \( \tau \),
let \( \xchar{\tau}{i} \) denote the character of
the cell of \( \tau \) containing \( i \).

For \( \tau \in \SYT(n) \), define \( d_\tau \) by
\[
  d_\tau = \prod_{i=1}^n d_{\lchar{\tau}{i-1}, \xchar{\tau}{i}}.
\]
For \( \tau \in \SYT(n) \) and \( \pi \in \Dyck(n, k) \),
we define \( \coeff_{\pi, \tau} \) by
\begin{equation} \label{eq:c_pi_T}
  \coeff_{\pi, \tau} = (-1)^n T_{\sh(\tau)} d_\tau \prod_{j = 1}^{n} \prod_{p=m_j(\pi)+1}^{j-1}
  \frac{q\xchar{\tau}{j} - qt\xchar{\tau}{p}}{\xchar{\tau}{j} - qt\xchar{\tau}{p}},
\end{equation}
where \( m_j(\pi) \) is the number of East steps
before the \( j \)th North step in \( \pi \).

\begin{lem} \label{lem:c_pi_T}
  Let \( \pi \in \Dyck(n, k) \). Then we have
  \[
    \chi(\pi) = \sum_{\tau \in \SYT(n)}\coeff_{\pi, \tau}
    I_{\sh(\tau), (\xchar{\tau}{n} ,\dots, \xchar{\tau}{k+1})}.
  \]
\end{lem}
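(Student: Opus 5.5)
The plan is to compute $d_\pi(1)$ directly by applying the steps of $\pi$ one at a time, using Proposition~\ref{prop:I_basis}. We start from $1=I_{\emptyset,\emptyset}$, which is the case $\lambda=\emptyset$ of \eqref{eq:I_basis0}. Each application of $d_+$ or $d_-$ to a basis element produces a linear combination of basis elements, so $d_\pi(1)$ expands as a sum over sequences of choices. A $d_+$ step requires choosing an addable cell $x$ of the current partition; a $d_-$ step involves no choice. A sequence of choices is therefore exactly a chain $\emptyset=\lambda^{(0)}\subset\lambda^{(1)}\subset\cdots\subset\lambda^{(n)}$ of partitions that grow by one cell at each North step. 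Such chains are in bijection with $\tau\in\SYT(n)$, and the cell added at the $j$th North step has character $\xchar{\tau}{j}$. It remains to identify, for each $\tau$, the index of the final basis element and the accumulated scalar.

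The key bookkeeping is the state of the sequence $w$ just before the $j$th North step. By \eqref{eq:I_basis_plus}, $d_+$ prepends the new cell to $w$, so $w$ lists the added cells from newest to oldest. By \eqref{eq:I_basis_minus}, $d_-$ deletes the last entry, which is the oldest surviving cell. Before the $j$th North step, exactly $m_j(\pi)$ East steps have occurred, so by induction on the number of steps the current index is
\[
\bigl(\lchar{\tau}{j-1},\,(\xchar{\tau}{j-1},\dots,\xchar{\tau}{m_j(\pi)+1})\bigr),
\]
whose length is $k_j=j-1-m_j(\pi)$. After all $n+k$ steps, the $k$ East steps have removed $\xchar{\tau}{1},\dots,\xchar{\tau}{k}$. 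The final index is therefore $(\sh(\tau),(\xchar{\tau}{n},\dots,\xchar{\tau}{k+1}))$, as claimed in the statement. The Dyck condition $m_j(\pi)\le j-1$, and more generally the requirement that $\pi$ never has more East steps than North steps, ensures that $d_-$ is never applied to an element with empty $w$.

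For the scalar, the $j$th North step contributes the following factor by \eqref{eq:I_basis_plus}:
\[
-q^{k_j}\,\xchar{\tau}{j}\,d_{\lchar{\tau}{j-1},\xchar{\tau}{j}}\prod_{p=m_j(\pi)+1}^{j-1}\frac{\xchar{\tau}{j}-t\xchar{\tau}{p}}{\xchar{\tau}{j}-qt\xchar{\tau}{p}} .
\]
The product has exactly $k_j$ factors, so the power $q^{k_j}$ can be absorbed one $q$ per factor. This turns each factor into $\frac{q\xchar{\tau}{j}-qt\xchar{\tau}{p}}{\xchar{\tau}{j}-qt\xchar{\tau}{p}}$. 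Taking the product over $j$ gives $\prod_j(-\xchar{\tau}{j})=(-1)^nT_{\sh(\tau)}$ and $\prod_j d_{\lchar{\tau}{j-1},\xchar{\tau}{j}}=d_\tau$. The result is exactly $\coeff_{\pi,\tau}$ in \eqref{eq:c_pi_T}.

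The main obstacle is justifying that Proposition~\ref{prop:I_basis} can be applied at every stage, including when an intermediate chain would produce a pair $(\lambda,v)\notin\FPI$, meaning $v$ is not a horizontal strip. I would handle this by observing that whenever $x$ lies directly above some $w_i$, the factor $x-tw_i$ in \eqref{eq:I_basis_plus} vanishes. Since $x=tw_i$ in that case, the corresponding term of $\coeff_{\pi,\tau}$ is zero, so such tableaux contribute nothing and the formula may harmlessly be stated as a sum over all of $\SYT(n)$. Two further points must also be checked: that the vanishing factor is not cancelled by a zero denominator, and that the convention $I_{\lambda,v}$ for non-FPI indices is immaterial. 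Once these are in place, the remaining argument is the straightforward induction above.
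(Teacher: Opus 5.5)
Your proposal is correct and is the paper's argument written non-recursively. The paper inducts on $n+k$, removing the last step of $\pi$ and applying \eqref{eq:I_basis_minus} or \eqref{eq:I_basis_plus}, with the same bookkeeping of $w$ and the same absorption of the power of $q$ into the factors of the product. The paper passes over the non-$\FPI$ issue in silence, and your flagged denominator check is genuinely needed: it holds at the step where a chain first leaves $\FPI$, since there $w$ is still a horizontal strip of the current shape and so $x\neq qtw_i$; but it can fail at later steps of the closed form \eqref{eq:c_pi_T}. For example, with $\pi=(+,+,+,+,-,-,-,-)$ and $\xchar{\tau}{1},\dots,\xchar{\tau}{4}=1,q,t,qt$ one has $\xchar{\tau}{4}-qt\,\xchar{\tau}{1}=0$. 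So $\coeff_{\pi,\tau}$ must be read as $0$ for such $\tau$, which is harmless because your stepwise expansion has already dropped them.
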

\begin{proof}
  Let the right hand side be \( \chi'(\pi) \). We argue by induction on \( n+k \).
  If \( n+k=0 \), then \( \pi \) is empty and
  \( \chi(\pi)=d_{\pi}(1)=1=\chi'(\pi) \).
  Now assume \( n+k>0 \), and let
  \( \hat{\pi}=(\pi_1,\dots,\pi_{n+k-1}) \) be the path obtained from \( \pi \) by deleting its last step.

  First suppose that the last step of \( \pi \) is an East step.
  Then \( \hat{\pi} \) and \( \pi \) have the same values of \( m_j(\,\cdot\,) \), so
  \( \coeff_{\hat{\pi},\tau}=\coeff_{\pi,\tau} \) for every \( \tau\in \SYT(n) \).
  By the induction hypothesis,
  \[
    \chi(\hat{\pi})
    =\sum_{\tau\in \SYT(n)} \coeff_{\pi,\tau}\,
      I_{\sh(\tau),(\xchar{\tau}{n},\dots,\xchar{\tau}{k})}.
  \]
  Applying \( d_- \) and using \eqref{eq:I_basis_minus} gives
  \[
    \chi(\pi)=d_-\chi(\hat{\pi})
    =\sum_{\tau\in \SYT(n)} \coeff_{\pi,\tau}\,
      I_{\sh(\tau),(\xchar{\tau}{n},\dots,\xchar{\tau}{k+1})}
    =\chi'(\pi).
  \]

  Next suppose that the last step of \( \pi \) is a North step.
  For each \( \tau\in \SYT(n) \), let \( \hat{\tau} \) be the tableau obtained from \( \tau \) by removing the cell containing \( n \). Then
  \[
    \coeff_{\pi,\tau}
    =-x_\tau(n)\, d_{\lchar{\tau}{n-1},x_\tau(n)}
      \left(\prod_{p=k+1}^{n-1}\frac{q x_\tau(n)-qt\xchar{{\tau}}{p}}{x_\tau(n)-qt\xchar{{\tau}}{p}}\right)
      \coeff_{\hat{\pi},\hat{\tau}},
  \]
  and so
\begin{align*}
        \chi'(\pi)&=\sum_{\tau\in \SYT(n)} -x_\tau(n)\, d_{\lchar{\tau}{n-1},x_\tau(n)}
      \left(\prod_{p=k+1}^{n-1}\frac{q x_\tau(n)-qt\xchar{{\tau}}{p}}{x_\tau(n)-qt\xchar{{\tau}}{p}}\right)
      \coeff_{\hat{\pi},\hat{\tau}}
      I_{\sh(\tau),(\xchar{\tau}{n},\xchar{\tau}{n-1},\dots,\xchar{\tau}{k+1})}
      \\
      &=\sum_{\hat{\tau}\in \SYT(n-1)}\sum_{x \in \Add(\sh(\hat{\tau}))} -x \, d_{\sh(\hat{\tau}),x}
      \left(\prod_{p=k+1}^{n-1}\frac{q x-qt\xchar{\hat{\tau}}{p}}{x-qt\xchar{\hat{\tau}}{p}}\right)
      \coeff_{\hat{\pi},\hat{\tau}}
      I_{\sh(\hat{\tau})\cup\{x\},(x,\xchar{\hat{\tau}}{n-1},\dots,\xchar{\hat{\tau}}{k+1})}.
\end{align*}
By applying \eqref{eq:I_basis_plus}, this equals
  \[
    \sum_{\hat{\tau}\in \SYT(n-1)} \coeff_{\hat{\pi},\hat{\tau}}\,
      d_+ I_{\sh(\hat{\tau}),(\xchar{\hat{\tau}}{n-1},\dots,\xchar{\hat{\tau}}{k+1})} = d_+ \chi'(\hat{\pi})=d_+ \chi(\hat{\pi}).
  \]
Here, the last equation follows from the induction hypothesis.
\end{proof}

We now also refine \( \chi(\pi, \wt) \) as follows.
Recall \eqref{eq:decomposition_multi}:
\[
  \chi(\pi, \wt) = \sum_{S \subseteq \corner(\pi)} F_S(\wt)\chi(\pi^S).
\]
For \( \tau \in \SYT(n), \pi \in \Dyck(n, k) \) and
the weight \( \wt: \corner(\pi) \rightarrow \mathbb{Q}(q, t) \),
we define 
\[
  \coeff_{\pi, \wt, \tau} = \sum_{S \subseteq \corner(\pi)} F_S(\wt) \coeff_{\pi^S, \tau}.
\]

Then by Lemma~\ref{lem:c_pi_T},
we can obtain the refinement of \eqref{eq:I_expansion} immediately.
\begin{equation}\label{eq:eq:I_expansion_wt}
    \chi(\pi, \wt) = \sum_{\tau \in \SYT(n)}
    \coeff_{\pi, \wt, \tau} I_{\sh(\tau), (\xchar{\tau}{n} , \dots, \xchar{\tau}{k+1})}.
\end{equation}

\begin{lem} \label{lem:factorization}
  Let \( \tau \in \SYT(n), \pi \in \Dyck(n, k) \) and
  \( \wt: \corner(\pi) \rightarrow \mathbb{Q}(q, t) \).
  The coefficient \( \coeff_{\pi, \wt, \tau} \) has a factorization
  \[
    (-1)^n T_{\sh(\tau)}d_\tau \prod_{j=1}^n\prod_{p=m_j(\pi)+1}^{j-1} \frac{q\xchar{\tau}{j}-qt\xchar{\tau}{p}}{\xchar{\tau}{j}-qt\xchar{\tau}{p}}
    \prod_{(i, j) \in \corner(\pi)} \left(
      \frac{q-\wt(i, j)}{q-1} - \frac{1-\wt(i, j)}{q-1}
      \frac{q\xchar{\tau}{j} - qt\xchar{\tau}{i}}
      {\xchar{\tau}{j} - qt\xchar{\tau}{i}}\right).
  \]
\end{lem}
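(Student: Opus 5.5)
The plan is to expand $\coeff_{\pi,\wt,\tau}=\sum_{S} F_S(\wt)\,\coeff_{\pi^S,\tau}$ directly and recognise the sum as a product over corners. Everything rests on comparing $\coeff_{\pi^S,\tau}$ with $\coeff_{\pi,\tau}$ for a single $\tau$. By \eqref{eq:c_pi_T}, the prefactor $(-1)^nT_{\sh(\tau)}d_\tau$ does not depend on the path at all. So only the double product over pairs $(p,j)$ with $m_j+1\le p\le j-1$ needs to be tracked.

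\textbf{Step 1: the effect of raising one corner.} Fix a corner $c=(i,j)\in\corner(\pi)$. In the paper's coordinates, $i$ is the column and $j$ the row, so $j$ indexes a North step and $i$ an East count. Passing from $\pi$ to $\pi^c$ swaps the East step and the North step surrounding the corner. Concretely, the North step that is the $j$th one is now preceded by one more East step, so $m_j(\pi^c)=m_j(\pi)+1=i$. All other $m_{j'}$ are unchanged.

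I first check, as a consistency test, that being a corner means exactly that $m_j(\pi)=i-1$. This is what "southern and eastern neighbors lie below the path" gives: with $m_j(\pi)=i-1$, the pairs $(p,j)$ with $p\ge i$ are below $\pi$.

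It follows that the index range $p\in[m_j+1,j-1]$ loses exactly $p=i$. Hence
\[
\coeff_{\pi^c,\tau}=\coeff_{\pi,\tau}\cdot\left(\frac{q\xchar{\tau}{j}-qt\xchar{\tau}{i}}{\xchar{\tau}{j}-qt\xchar{\tau}{i}}\right)^{-1}.
\]
Here I must be careful about which of the two directions of the swap removes the factor and which adds one. I will fix the convention against the statement: the stated formula puts the ratio next to the factor $(1-\wt)/(q-1)$, which multiplies $\chi(\pi^c)$. Therefore the ratio must be present in $\coeff_{\pi^c,\tau}$ and absent from the $\pi$-term.

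This forces the reading that the corner cell $(i,j)$ itself is a cell \emph{above} $\pi$. Raising the path over it puts $(i,j)$ below $\pi^c$, so the range for $\pi^c$ is $[i,j-1]$ and the range for $\pi$ is $[i+1,j-1]$. I will verify this against the definition of corners, "cells above $\pi$", which indeed says $(i,j)$ is not below $\pi$. With this reading the comparison is
\[
\coeff_{\pi^c,\tau}=\coeff_{\pi,\tau}\cdot\frac{q\xchar{\tau}{j}-qt\xchar{\tau}{i}}{\xchar{\tau}{j}-qt\xchar{\tau}{i}}.
\]

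\textbf{Step 2: independence of corners.} The next point is that distinct corners lie in distinct rows $j$, since each row has at most one corner. So the modifications for different $c\in S$ change different $m_j$'s and commute. This gives
\[
\coeff_{\pi^S,\tau}=\coeff_{\pi,\tau}\prod_{(i,j)\in S}\frac{q\xchar{\tau}{j}-qt\xchar{\tau}{i}}{\xchar{\tau}{j}-qt\xchar{\tau}{i}}.
\]
Here $\coeff_{\pi,\tau}$ is the expression with ranges $[m_j(\pi)+1,j-1]$, exactly the first part of the claimed product.

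\textbf{Step 3: the sum over $S$.} Substituting into $\sum_S F_S(\wt)\coeff_{\pi^S,\tau}$ and using the product form of $F_S$, the sum over subsets factors by the distributive law into
\[
\prod_{c}\left(\frac{q-\wt(c)}{q-1}+\frac{\wt(c)-1}{q-1}\,R_c\right),
\qquad R_c=\frac{q\xchar{\tau}{j}-qt\xchar{\tau}{i}}{\xchar{\tau}{j}-qt\xchar{\tau}{i}}.
\]
This is the stated factor, since $\frac{\wt(c)-1}{q-1}=-\frac{1-\wt(c)}{q-1}$.

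\textbf{Main obstacle.} The only delicate point is Step 1 and the bookkeeping behind it. One must get the orientation right (which $(i,j)$ is a corner and where it sits relative to $\pi$) so that exactly one factor, $p=i$ in row $j$, is toggled. One must also check that raising a corner of a partial path keeps it inside $\Dyck(n,k)$ and creates no other change in the $m$'s. Everything else is the distributive identity.
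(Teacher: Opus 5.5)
Your proposal is correct and is essentially the paper's proof: each $\pi^S$ differs from $\pi$ only at the corners, raising the corner $(i,j)$ adds exactly the factor $p=i$ to the row-$j$ product, and the sum over $S$ then factors by the distributive law. The one thing to fix is the first paragraph of Step 1, which has the orientation backwards: a corner is a cell above $\pi$ whose eastern neighbour lies below $\pi$, so $m_j(\pi)=i$ and $m_j(\pi^c)=i-1$ (not $i-1$ and $i$). That orientation should be read off directly from this definition, as in your corrected reading, rather than chosen to match the target formula.
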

\begin{proof}
  From \eqref{eq:c_pi_T}, we have
  \[
    \coeff_{\pi, \wt, \tau} = (-1)^n T_{\sh(\tau)}d_\tau
    \sum_{S \subseteq \corner(\pi)} F_S(\wt) \prod_{j=1}^n \prod_{p=m_j(\pi^S)+1}^{j-1}
    \frac{q\xchar{\tau}{j} - qt\xchar{\tau}{p}}{\xchar{\tau}{j} - qt\xchar{\tau}{p}}.
  \]
  The paths \( \pi^S \) are only different at \( \corner(\pi) \).
  Factoring out the common factors,
\begin{align*}
  &\sum_{S \subseteq \corner(\pi)} F_S(\wt) \prod_{j=1}^n \prod_{p=m_j(\pi^S)+1}^{j-1}
    \frac{q\xchar{\tau}{j} - qt\xchar{\tau}{p}}{\xchar{\tau}{j} - qt\xchar{\tau}{p}} \\
  &= \left(\prod_{j=1}^n \prod_{p=m_j(\pi)+1}^{j-1} \frac{q\xchar{\tau}{j} - qt\xchar{\tau}{p}}{\xchar{\tau}{j} - qt\xchar{\tau}{p}}\right)
    \sum_{S \subseteq \corner(\pi)} F_S(\wt) \prod_{(i, j) \in S}
    \frac{q\xchar{\tau}{j} - qt\xchar{\tau}{i}}{\xchar{\tau}{j} - qt\xchar{\tau}{i}} \\
  &= \left(\prod_{j=1}^n \prod_{p=m_j(\pi)+1}^{j-1} \frac{q\xchar{\tau}{j} - qt\xchar{\tau}{p}}{\xchar{\tau}{j} - qt\xchar{\tau}{p}}\right)
    \prod_{(i, j) \in \corner(\pi)} \left(
    \frac{q-\wt(i, j)}{q-1} - \frac{1-\wt(i, j)}{q-1}
    \frac{q\xchar{\tau}{j} - qt\xchar{\tau}{i}}
    {\xchar{\tau}{j} - qt\xchar{\tau}{i}}\right).
\end{align*}
\end{proof}

\begin{corollary} \label{cor:candidate}
  The coefficient \( \coeff_{\pi, \wt, \tau} = 0 \) if and only if
  \begin{enumerate}
  \item \( \xchar{\tau}{j} = t\xchar{\tau}{p} \) for some \( j,p \) satisfying
    \( 1 \le j \le n \) and \( m_j(\pi)+1 \le p \le j-1 \); or
  \item \( \wt(i, j) = qt\xchar{\tau}{i} / \xchar{\tau}{j}  \) for some
    \( (i, j) \in \corner(\pi) \).
  \end{enumerate}
\end{corollary}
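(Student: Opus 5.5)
The plan is to read the vanishing locus directly off the product in Lemma~\ref{lem:factorization}. We regard $\coeff_{\pi,\wt,\tau}$ as an element of $\mathbb{Q}(q,t)$, and note that every character $\xchar{\tau}{i}$ is a monomial $q^{a}t^{b}$. A product of well-defined elements of a field vanishes if and only if one factor vanishes. So the proof has three steps: show the prefactor $(-1)^nT_{\sh(\tau)}d_\tau$ never vanishes, show no factor has a vanishing denominator, and identify exactly when each remaining factor is zero.

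First, $T_{\sh(\tau)}$ is a monomial and hence nonzero. Each $d_{\lambda,x}$ is a product of quotients whose numerators and denominators have the shapes $q^{a}-t^{l+1}$, $q^{a+1}-t^{l}$ and $q^{a+1}-t^{l+1}$ with $a,l\ge 0$. Each of these is a difference of two distinct monomials (one of the two exponents is always positive), so it is nonzero. Hence $d_\tau\neq 0$.

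Next, simplify the corner factor. Clearing the denominator $(q-1)(\xchar{\tau}{j}-qt\xchar{\tau}{i})$, the numerator is
\[
(q-\wt)(\xchar{\tau}{j}-qt\xchar{\tau}{i})-(1-\wt)(q\xchar{\tau}{j}-qt\xchar{\tau}{i}).
\]
The $q\xchar{\tau}{j}$ terms and the $\wt\, qt\,\xchar{\tau}{i}$ terms cancel, leaving $(q-1)(\wt\,\xchar{\tau}{j}-qt\,\xchar{\tau}{i})$. So the factor at $(i,j)\in\corner(\pi)$ equals
\[
\frac{\wt(i,j)\,\xchar{\tau}{j}-qt\,\xchar{\tau}{i}}{\xchar{\tau}{j}-qt\,\xchar{\tau}{i}}.
\]
This vanishes exactly when $\wt(i,j)=qt\xchar{\tau}{i}/\xchar{\tau}{j}$, which is condition (2). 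Similarly, the factor $(q\xchar{\tau}{j}-qt\xchar{\tau}{p})/(\xchar{\tau}{j}-qt\xchar{\tau}{p})$ vanishes exactly when $\xchar{\tau}{j}=t\xchar{\tau}{p}$, which is condition (1).

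The main obstacle is ruling out zero denominators, i.e.\ showing $\xchar{\tau}{j}\neq qt\,\xchar{\tau}{p}$ for every relevant pair. This applies both to $m_j(\pi)<p<j$ and to corners $(i,j)$. A corner $(i,j)$ is exactly such a pair for the path $\pi^{c}$ that goes above the corner, so both cases are handled together. Pairs $(p,j)$ of this kind are precisely the factors that arise when the $j$th North step applies $d_+$ via \eqref{eq:I_basis_plus}. At that moment the cells $\xchar{\tau}{p}$ with $m_j<p<j$ are entries of the current horizontal strip $w$, and the newly added cell is $x=\xchar{\tau}{j}$.

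The key fact is that $(\lambda,w)\in\FPI$ forces each strip cell $w_i$ to have no cell of $\lambda$ directly above it. If $x=qt\,w_i$, then $x$ lies one step right and one step up from $w_i$. For $x$ to be addable, the cell directly above $w_i$ would have to lie in $\lambda$, a contradiction. Hence all denominators are nonzero, the factorization is a genuine product of nonzero-denominator factors, and $\coeff_{\pi,\wt,\tau}=0$ holds if and only if condition (1) or condition (2) holds. In writing this up, I would double-check that the strip seen at the $j$th North step in $\pi^{c}$ still contains $\xchar{\tau}{i}$ for a corner $(i,j)$; this amounts to checking $m_j(\pi^c)<i$.
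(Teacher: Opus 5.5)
Your route is the paper's: read the zero locus off the product in Lemma~\ref{lem:factorization}. Your check that $d_\tau\neq 0$ and your simplification of the corner factor to $\bigl(\wt(i,j)\,x_\tau(j)-qt\,x_\tau(i)\bigr)/\bigl(x_\tau(j)-qt\,x_\tau(i)\bigr)$ are both correct.

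\textbf{The gap.} Your claim that every denominator is nonzero is false in general. Your argument assumes that at the $j$th North step the index $\bigl(\sh(\tau_{\le j-1}),(x_\tau(j-1),\dots,x_\tau(m_j(\pi)+1))\bigr)$ lies in $\FPI$. But Lemma~\ref{lem:c_pi_T} sums over \emph{all} $\tau\in\SYT(n)$, and for many $\tau$ these sequences are not horizontal strips. For example:
\begin{itemize}
\item Take $\pi=(+,+,+,+,-,-,-,-)$, so $m_j(\pi)=0$ for all $j$.
\item Let $\tau$ have shape $(2,2)$ with $1,2$ in the first column and $3,4$ in the second, so $x_\tau(1),\dots,x_\tau(4)=1,t,q,qt$.
\item The factor for $(j,p)=(4,1)$ has denominator $x_\tau(4)-qt\,x_\tau(1)=0$.
\item The factor for $(j,p)=(2,1)$ has numerator $0$.
\item The ``strip'' $(q,t,1)$ seen at step $4$ has two cells in column $1$, so it is not in $\FPI$.
\end{itemize}
So the closed product is $0\cdot(\text{undefined})$ here, and ``a product in a field vanishes iff some factor does'' cannot be applied.

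\textbf{The fix: split on whether condition (1) holds.}

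First, a vanishing denominator always forces condition (1). Suppose $x_\tau(j)=qt\,x_\tau(p)$, where either $m_j(\pi)<p<j$ or $(p,j)\in\corner(\pi)$.
\begin{itemize}
\item The cell $x_\tau(j)$ lies in column $\ge 2$ and is addable to $\sh(\tau_{\le j-1})$, so the cell directly above $x_\tau(p)$ lies in $\tau_{\le j-1}$.
\item Its entry $p'$ satisfies $p<p'<j$ and $x_\tau(p')=t\,x_\tau(p)$.
\item Moreover $m_{p'}(\pi)<p$. In the corner case this holds because $m_{p'}(\pi)\le m_{j-1}(\pi)\le p-1$, since the $j$th North step directly follows the $p$th East step.
\end{itemize}
Hence, if (1) fails, all denominators are nonzero, and your argument correctly gives $\coeff_{\pi,\wt,\tau}=0$ iff (2).

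If (1) holds, you cannot use the closed formula. Instead, interpret $\coeff_{\pi^S,\tau}$ through the recursion in the proof of Lemma~\ref{lem:c_pi_T}:
\begin{itemize}
\item Up to the first North step at which $x_\tau(j)=t\,x_\tau(p)$ for a strip cell, the indices genuinely lie in $\FPI$. There your ``no cell above a strip cell'' argument is valid.
\item At that step, the coefficient in \eqref{eq:I_basis_plus} is $0$, so the term dies.
\item Since $m_j(\pi^S)\le m_j(\pi)$, condition (1) for $\pi$ implies (1) for every $\pi^S$.
\end{itemize}
Therefore every $\coeff_{\pi^S,\tau}$ vanishes, and hence so does $\coeff_{\pi,\wt,\tau}$.

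The paper's own proof simply reads off numerator zeros and does not address denominators, so this same convention is implicitly needed there as well.
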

\begin{proof}
  By Lemma~\ref{lem:factorization}, \( \coeff_{\pi, \wt, \tau} = 0 \) if and only if
  \( q\xchar{\tau}{j} - qt\xchar{\tau}{i} = 0 \) for some $(i,j)$ such that \( 1 \le j \le n \) and \( m_j(\pi) + 1 \le i \le j-1 \) or
  \[
    \left( \frac{q-\wt(i, j)}{q-1} - \frac{1-\wt(i, j)}{q-1}
      \frac{q\xchar{\tau}{j} - qt\xchar{\tau}{i}}
      {\xchar{\tau}{j} - qt\xchar{\tau}{i}}\right) = 0
  \]
  for some corner \( (i, j) \in \corner(\pi) \).
  The first equality is equivalent to \( \xchar{\tau}{j} = t\xchar{\tau}{i} \),
  and the second equality is equivalent to \( \wt(i, j) = qt\xchar{\tau}{i} / \xchar{\tau}{j} \).
\end{proof}

\subsection{Proof of the main theorem}
In this subsection, we prove Theorem~\ref{thm:main}.

For brevity, we denote \( \alpha = \alpha(\mu, w) \),
\( \pi = \pi_{\mu, w} \) and \( \wt = \wt_{\mu, w} \).
Assuming \( |w| = \mu_1 \), we construct the tableau \( \tau^* = \tau^*(\mu, w) \)
as follows. For \( 1 \le r \le \ell = \ell(\mu) \), define
\( N_r = \mu_\ell+\mu_{\ell-1}+\cdots+\mu_{\ell-r+1} \), with \( N_0=0 \).
At stage \( r \), order the first \( \mu_{\ell - r + 1} \) columns of \( \mu \)
by decreasing index in \( w \) of their top cells.
In that order, place the integers \( N_{r-1}+1,\ldots,N_r \),
one in each column, in the lowest currently empty cell of that column.
After stage \( \ell \), every cell of \( \dg(\mu) \) has been filled;
the resulting tableau is \( \tau^* \).
See \Cref{fig:rearranging} for an example of \( \tau^* \).

The tableau \( \tau^* \) is indeed standard.
The condition that
\( \mu \setminus \{w_1 ,\dots, w_i\} \in \Par \)
for every \( i \) ensures that, at each stage,
no smaller entry is placed to the right of a larger entry.
Moreover, since we use the smallest unused integers in each stage,
entries are increasing in every column.

The properties below of \( \tau^* \) follow directly from its construction.

\begin{lem}\label{lem:tau_star}
  Let \( \tau^* \) be the standard tableau constructed as above.

  \begin{enumerate}
  \item Let \( u, v \in \dg'(\alpha) \) be the \( i \)th cell and \( j \)th cell in reading order,
    respectively. If \( u \) and \( v \) are vertically adjacent and \( i < j \), then 
    \( \xchar{\tau^*}{j} \) is directly above the cell \( \xchar{\tau^*}{i} \).
    Equivalently, \( \xchar{\tau^*}{j} = t\xchar{\tau^*}{i} \).
  \item Let \( u \in \dg'(\alpha) \) be the \( i \)th cell in reading order.
    Then there are exactly \( l_\alpha(u) \) cells below \( \xchar{\tau^*}{i} \).
    Equivalently, the cell \( \xchar{\tau^*}{i} \) is in the \( (l_\alpha(u)+1) \)th row. In particular, if \( l_\alpha(u) = 0 \), \( \xchar{\tau^*}{i} \) is in the first row.
  \item The set of cells \( \{\xchar{\tau^*}{p} : m_j(\pi)+1 \le p \le j-1\} \)
    forms a maximal horizontal strip of \( \lchar{\tau^*}{j-1} \).
  \end{enumerate}
\end{lem}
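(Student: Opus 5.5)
The plan is to make the construction of \( \tau^* \) explicit in terms of the reading order of \( \dg'(\alpha) \). Everything then follows from one dictionary between the two. Write \( \alpha=\alpha(\mu,w) \) with \( |w|=\mu_1 \), and index the columns of \( \dg'(\alpha) \) from left to right. By definition of \( \alpha \), the left-to-right order of the columns is the order of decreasing index in \( w \) of their top cells.

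The reading order of \( \dg'(\alpha) \) reads the rows from the top down, and each row from left to right. Row \( h \) (from the bottom) of \( \dg'(\alpha) \) contains exactly the columns of height \( \ge h \). In the Young diagram \( \mu \) these are exactly the first \( \mu'_{\dots} \) columns of length \( \ge h \).

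Stage \( r \) of the construction of \( \tau^* \) uses the first \( \mu_{\ell-r+1} \) columns of \( \mu \), which are exactly the columns of height \( \ge \ell-r+1 \). It writes the next integers into them in \( w \)-decreasing order, that is, in the left-to-right order of \( \dg'(\alpha) \). So stage \( r \) corresponds exactly to reading row \( \ell-r+1 \) of \( \dg'(\alpha) \). The key claim, proved by induction on \( r \), is therefore:
\[
\text{if } u\in\dg'(\alpha) \text{ is the } i\text{th cell in reading order and lies in column } c, \text{ then } \xchar{\tau^*}{i} \text{ lies in the column of } \mu \text{ corresponding to } c,
\]
\[
\text{at height } 1+\#\{\text{cells of column } c \text{ read before } u\}.
\]
Since column \( c \) is read from the top down, the number of cells of column \( c \) read before \( u \) is exactly \( l_\alpha(u) \). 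This gives (2) at once.

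For (1), suppose \( u \) is the \( i \)th cell, \( v \) is the \( j \)th cell, they are vertically adjacent, and \( i<j \). Then \( u \) is directly above \( v \) in the same column. So \( v \) is the next cell of that column to be read after \( u \). By the claim, \( \xchar{\tau^*}{j} \) sits one row higher than \( \xchar{\tau^*}{i} \) in the same column of \( \mu \), that is, \( \xchar{\tau^*}{j}=t\,\xchar{\tau^*}{i} \).

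For (3), recall how \( \pi \) is built. For \( j \) in row \( h \) of \( \dg'(\alpha) \), the cells \( p<j \) with \( (p,j) \) below \( \pi \) are those attacking \( v_j \). These are the earlier cells of row \( h \), together with the cells of row \( h+1 \) lying to the right of \( v_j \). Since attacking pairs form a contiguous block ending at \( j-1 \), this block is exactly \( \{m_j(\pi)+1,\dots,j-1\} \).

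I would then check that this block consists of exactly one cell from each column of \( \dg'(\alpha) \) that has already been started. Columns to the left of \( v_j \) of height \( \ge h \) contribute their row-\( h \) cell. Columns to the right of \( v_j \) of height \( \ge h+1 \) contribute their row-\( (h+1) \) cell. Columns to the right of height exactly \( h \) have not been started yet. By the claim, each contributed cell maps to the current top cell of the corresponding column of \( \lchar{\tau^*}{j-1} \). The top cells of all nonempty columns form the maximal horizontal strip.

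One care point remains for the partial path \( \pi_{\mu,w} \): deleting East steps at the end does not change \( m_j \), because those steps come after all North steps. Also, columns of height \( h \) that are to the right of \( v_j \) are not yet nonempty in \( \tau^*_{\le j-1} \), so the strip really is maximal.

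The main obstacle is this bookkeeping in (3). The attacking block spans two rows of \( \dg'(\alpha) \), while in \( \lchar{\tau^*}{j-1} \) the corresponding cells are top cells of columns that may have different heights. The single claim above, that a column's \( \mu \)-height equals its number of cells read so far, is what ties the two pictures together, so I would prove it carefully first.
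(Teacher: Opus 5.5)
Your dictionary between stage $r$ of the construction and row $\ell-r+1$ of $\dg'(\alpha)$ is correct, and it yields (1) and (2) exactly as you describe; the paper simply asserts these follow from the construction. The gap is in (3). When you list the columns of $\dg'(\alpha)$ already started before $v_j$ (row $h$, column $c$), you treat the columns to the left and to the right of $v_j$ but omit column $c$ itself. Suppose column $c$ has height greater than $h$, i.e.\ some cell $u$ lies directly above $v_j$. Then this column has been started, and by your own claim the top cell of the corresponding column of $\lchar{\tau^*}{j-1}$ is the image of $u$. But $u$ is not strictly to the right of $v_j$, so it does not attack $v_j$. It is the $m_j(\pi)$th cell, immediately before the block, and $(m_j(\pi),j)$ is the corner of $\pi$ in row $j$. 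So the block misses this column, and your closing assertion that ``the strip really is maximal'' fails.

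This cannot be patched, because (3) read literally is false whenever the $j$th North step follows an East step. For $\mu=(1,1)$ one has $\alpha=(2)$ and $\pi=(+,-,+)$. For $j=2$ the block $\{m_2(\pi)+1,\dots,1\}$ is empty, while $\lchar{\tau^*}{1}=(1)$ has a nonempty maximal horizontal strip. A nonempty instance comes from Figure~\ref{fig:rearranging} with $j=10$, writing cells as (column, row). Here $m_{10}(\pi)=5$, and the block $\{6,7,8,9\}$ maps to $(2,2),(3,2),(1,3),(5,1)$ in $\lchar{\tau^*}{9}=(5,3,1)$. It misses the top cell $(4,1)$ of column $4$, which carries the entry $5$ and on which $\xchar{\tau^*}{10}=t\,\xchar{\tau^*}{5}$ is placed.

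What your argument proves, once column $c$ is included, is the corrected statement: the block consists of the top cells of the nonempty columns of $\lchar{\tau^*}{j-1}$ other than the column that receives $\xchar{\tau^*}{j}$. If the $j$th North step follows a North step, this is the maximal horizontal strip, since $\xchar{\tau^*}{j}$ then opens a new column in the first row. Otherwise it is the maximal strip minus $\xchar{\tau^*}{m_j(\pi)}$. This corrected version is also what the later argument needs. If $\xchar{\tau^*}{m_j(\pi)}$ were in the block, Corollary~\ref{cor:candidate}(1) together with part (1) would force $\coeff_{\pi,\wt,\tau^*}=0$. In the East case of Step~1, the only addable cell outside the first row that is not directly above a block cell is $\xchar{\tau^*}{j}$ itself. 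It is excluded by the hypothesis $\xchar{\tau}{j}\neq\xchar{\tau^*}{j}$, not by (3).
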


We now claim a refinement of \eqref{eq:claim},
\[
  \coeff_{\pi, \wt, \tau} =
  \begin{cases}
    (-1)^{|\mu|}T_{\mu} & \text{if} \quad \tau = \tau^*, \\
    0 & \text{otherwise}
  \end{cases}.
\]

\noindent \textbf{Step 1: Uniqueness of the surviving SYT.}
We first prove that
\( \coeff_{\pi,\wt,\tau}=0 \) whenever \( \tau\neq\tau^* \).
Let \(j\) be the smallest positive integer such that
\( \xchar{\tau}{j} \neq \xchar{\tau^*}{j} \). Note that $\xchar{\tau}{j}$ corresponds to the cell added at the \(j\)th North step of
\( \pi \). We divide the argument into two cases, according to whether
the preceding step is a North step or an East step.

Suppose first that the preceding step is a North step. Then
\( \xchar{\tau^*}{j} \) is the addable cell in the first row of
\( \lchar{\tau^*}{j-1} \). Any other choice of addable cell for
\( \xchar{\tau}{j} \) give
\( \coeff_{\pi,\wt,\tau}=0 \) by part~(1) of
Corollary~\ref{cor:candidate} and part~(3) of Lemma~\ref{lem:tau_star}.

Now suppose that the preceding step is the \(i\)th East step.
If \( \xchar{\tau}{j} \) is not the addable cell in the first row
of \( \lchar{\tau^*}{j-1} \), part~(1) of Corollary~\ref{cor:candidate} gives
\( \coeff_{\pi,\wt,\tau}=0 \).
Now, suppose that \( \xchar{\tau}{j} \) is the addable cell in the first row.
Since \( i < j \), the minimality of \(j\) gives
\(\xchar{\tau^*}{i}=\xchar{\tau}{i}\).
For the corner \( (i,j) \in \corner(\pi) \),
let \( u \) and \( v \) be the \( i \)th cell and the \( j \)th cell
in the reading order of \( \alpha = \alpha(\mu,w)\).
By definition,
\[
  \wt(i,j)=q^{-a_\alpha(u)}t^{l_\alpha(u)+1}.
\]
Note that \(a_\alpha(u)\) counts the cells \(c\in\dg'(\alpha)\) that lie between
\(u\) and \(v\) in the reading order and satisfy
\(l_\alpha(c)\le l_\alpha(u)\). Suppose that \(c\) is the \(k\)th cell in that order. 
Then \(l_\alpha(c)+1\) is the height of \(\xchar{\tau^*}{k}\)
by part~(2) of Lemma~\ref{lem:tau_star}.
Consequently,
the inequality \(l_\alpha(c)\le l_\alpha(u)\) says that the height of \(\xchar{\tau^*}{k}\) is no
larger than the height of \(\xchar{\tau^*}{i}\).
Since \( \lchar{\tau}{j} \) always has partition shape and \( i < k \),
the column containing \(\xchar{\tau^*}{k}\) comes after the column containing \(\xchar{\tau^*}{i}\).
Thus \(a_\alpha(u)\) is exactly
the number of columns to the right of \(\xchar{\tau^*}{i}\) in \( \lchar{\tau^*}{j-1} \).
In addition, by part~(2) of Lemma~\ref{lem:tau_star}, the height of
\( \xchar{\tau^*}{i} \) is \( l_\alpha(u) + 1 \).
Since \(\xchar{\tau^*}{i}=\xchar{\tau}{i}\), while
\(\xchar{\tau}{j}\) is the addable cell in the first row, their relative
positions yield
\[
  \frac{\xchar{\tau}{i}}{\xchar{\tau}{j}} = q^{-a_\alpha(u)-1}t^{l_\alpha(u)},
\]
which is equivalent to part~(2) of Corollary~\ref{cor:candidate}.
  
  \noindent \textbf{Step 2: Computation of the coefficient \( \coeff_{\pi, \wt, \tau^*} \).} \\
  Now we need to show that \( \coeff_{\pi, \wt, \tau^*} = (-1)^nT_{\mu} \).
  It is enough to show that
  \[
    d_{\tau^*} \prod_{j=1}^n\prod_{p=m_j(\pi)+1}^{j-1} \frac{q\xchar{\tau^*}{j}-qt\xchar{\tau^*}{p}}{\xchar{\tau^*}{j}-qt\xchar{\tau^*}{p}}
    \prod_{(i, j) \in \corner(\pi)} \left(
      \frac{q-\wt(i, j)}{q-1} - \frac{1-\wt(i, j)}{q-1}
      \frac{q\xchar{\tau^*}{j} - qt\xchar{\tau^*}{i}}
      {\xchar{\tau^*}{j} - qt\xchar{\tau^*}{i}}\right)
    = 1.
  \]
  By part~(1) of Lemma~\ref{lem:tau_star}, we have
  \( \xchar{\tau^*}{j} = t\xchar{\tau^*}{i} \) for \( (i, j) \in \corner(\pi) \). Hence,   the expression reduces to
  \[
    d_{\tau^*} \prod_{j=1}^n\prod_{p=m_j(\pi)+1}^{j-1} \frac{q\xchar{\tau^*}{j}-qt\xchar{\tau^*}{p}}{\xchar{\tau^*}{j}-qt\xchar{\tau^*}{p}}
    \prod_{(i, j) \in \corner(\pi)} \frac{q-\wt(i, j)}{q-1} 
    = 1.
  \]
  It suffices to prove the following two assertions: if the \( j \)th North
  step is the step after the $(j-1)$th North step, then \begin{equation}\label{eq:individual_edge}
    d_{\lchar{\tau^*}{j-1}, \xchar{\tau^*}{j}}\prod_{p=m_j(\pi)+1}^{j-1}
    \frac{q\xchar{\tau^*}{j}-qt\xchar{\tau^*}{p}}{\xchar{\tau^*}{j}-qt\xchar{\tau^*}{p}} = 1,
  \end{equation}
  and if the \( j \)th North step is the step after the \( i \)th East step, then
\begin{equation}\label{eq:individual_corner}
    d_{\lchar{\tau^*}{j-1}, \xchar{\tau^*}{j}}  \prod_{p=m_j(\pi)+1}^{j-1}
    \frac{q\xchar{\tau^*}{j}-qt\xchar{\tau^*}{p}}{\xchar{\tau^*}{j}-qt\xchar{\tau^*}{p}}
    \left(\frac{q-\wt(i, j)}{q-1}\right) = 1.
  \end{equation}

Recall the definition of \( d_{\lchar{\tau^*}{j-1}, \xchar{\tau^*}{j}} \):
  \[
d_{\lchar{\tau^*}{j-1}, \xchar{\tau^*}{j}} = \prod_{u \in \mathcal{R}_{\lchar{\tau^*}{j-1}, \xchar{\tau^*}{j}}} \frac{q^{a(u)} - t^{l(u) + 1}}{q^{a(u) + 1} - t^{l(u) + 1}}
    \prod_{u \in \mathcal{C}_{\lchar{\tau^*}{j-1}, \xchar{\tau^*}{j}}} \frac{q^{a(u)+1} - t^{l(u)}}{q^{a(u) + 1} - t^{l(u) + 1}},
  \]
where \( \mathcal{R}_{\lchar{\tau^*}{j-1}, \xchar{\tau^*}{j}} \) is the set of cells in \( \lchar{\tau^*}{j-1} \)
  that lie in the same row as \( \xchar{\tau^*}{j} \),
and \( \mathcal{C}_{\lchar{\tau^*}{j-1}, \xchar{\tau^*}{j}} \) is the set of cells in \( \lchar{\tau^*}{j-1} \)
  that lie in the same column as \( \xchar{\tau^*}{j} \).
  Define 
  \[
    \mathcal{W}_{\lchar{\tau^*}{j-1}, \xchar{\tau^*}{j}} = \{ \xchar{\tau^*}{p} : m_j(\pi)+1 \le p \le j-1, \xchar{\tau^*}{p} \text{ lies to the left of } \xchar{\tau^*}{j} \}
  \]
 and
 \[
    \mathcal{E}_{\lchar{\tau^*}{j-1}, \xchar{\tau^*}{j}} = \{ \xchar{\tau^*}{p} : m_j(\pi)+1 \le p \le j-1, \xchar{\tau^*}{p} \text{ lies to the right of } \xchar{\tau^*}{j} \}.
 \]
 See \Cref{fig:RWCE}.

\begin{figure}
  \centering
  \begin{tikzpicture}[scale=0.7, thick]
    
    % Define styles for the regions
    \tikzset{
      Rset/.style={pattern=north west lines, pattern color=red!70},
      Wset/.style={pattern=north east lines, pattern color=blue!70},
      Cset/.style={pattern=vertical lines, pattern color=green!60!black},
      Eset/.style={pattern=crosshatch dots, pattern color=orange!80!black},
      labelbox/.style={fill=white, inner sep=1.5pt, font=\footnotesize, rounded corners=1pt}
    }

    % 1. R(x) Region
    \filldraw[Rset] (0, 4) rectangle (10, 5);

    % 2. W(x) Region 
    % Note: The block (6,4) to (10,5) naturally overlaps with R(x), 
    % the crosshatching of north west (red) and north east (blue) will look great here.
    \filldraw[Wset] (0, 6) rectangle (3, 7);
    \filldraw[Wset] (3, 5) rectangle (6, 6);
    \filldraw[Wset] (6, 4) rectangle (10, 5);

    % 3. C(x) Region
    \filldraw[Cset] (10, 0) rectangle (11, 4);

    % 4. E(x) Region
    \filldraw[Eset] (11, 3) rectangle (16, 4);
    \filldraw[Eset] (16, 1) rectangle (18, 2);
    \filldraw[Eset] (18, 0) rectangle (21, 1);

    % Main Partition Outline \lambda
    \draw[line width=1.2pt] (0, 0) -- (21, 0) -- (21, 1) -- (18, 1) -- (18, 2) -- (16, 2)
    -- (16, 4) -- (10, 4) -- (10, 5) -- (6, 5) -- (6, 6) -- (3, 6) -- (3, 7)
    -- (0, 7) -- cycle;

    % Target Cell x_\tau^*(j)
    \filldraw[fill=yellow!40, draw=black, thick] (10, 4) rectangle (11, 5);
    \node[font=\footnotesize] at (10.5, 4.5) {\( \xchar{\tau^*}{j} \)};

    % Labels for R(x)
    \node[labelbox] at (0.5, 4.5) {\( r_1 \)};
    \node[labelbox] at (1.5, 4.5) {\( r_2 \)};
    \node[fill=none, font=\footnotesize] at (3.5, 4.5) {\( \cdots \)};

    % Labels for W(x)
    \node[labelbox] at (0.5, 6.5) {\( v_1 \)};
    \node[labelbox] at (1.5, 6.5) {\( v_2 \)};

    % Labels for C(x)
    \node[labelbox] at (10.5, 3.5) {\( y_1 \)};
    \draw [decorate, decoration = {brace, amplitude = 4pt}, thick] (10, 2) -- (10, 4)
    node [midway, left, xshift=-4pt, font=\footnotesize]
    {\( \mathcal{C}^{(1)}_{\lchar{\tau^*}{j-1}, \xchar{\tau^*}{j}} \)};
    \node[labelbox] at (10.5, 1.5) {\( y_2 \)};
    \draw (10, 2) -- (11, 2);
    \draw [decorate, decoration = {brace, amplitude = 4pt}, thick] (10, 1) -- (10, 2)
    node [midway, left, xshift=-4pt, font=\footnotesize]
    {\( \mathcal{C}^{(2)}_{\lchar{\tau^*}{j-1}, \xchar{\tau^*}{j}} \)};
    \node[labelbox] at (10.5, 0.5) {\( y_3 \)};
    \draw (10, 1) -- (11, 1);
    \draw [decorate, decoration = {brace, amplitude = 4pt}, thick] (10, 0) -- (10, 1)
    node [midway, left, xshift=-4pt, font=\footnotesize]
    {\( \mathcal{C}^{(3)}_{\lchar{\tau^*}{j-1}, \xchar{\tau^*}{j}} \)};

    % Labels for E(x)
    \draw [decorate, decoration = {brace, amplitude = 4pt}, thick] (11, 4) -- (16, 4)
    node [midway, above, yshift=2pt, font=\footnotesize]
    {\( \mathcal{E}^{(1)}_{\lchar{\tau^*}{j-1}, \xchar{\tau^*}{j}} \)};
    \draw [decorate, decoration = {brace, amplitude = 4pt}, thick] (16, 2) -- (18, 2)
    node [midway, above, yshift=2pt, font=\footnotesize]
    {\( \mathcal{E}^{(2)}_{\lchar{\tau^*}{j-1}, \xchar{\tau^*}{j}} \)};
    \draw [decorate, decoration = {brace, amplitude = 4pt}, thick] (18, 1) -- (21, 1)
    node [midway, above, yshift=2pt, font=\footnotesize]
    {\( \mathcal{E}^{(3)}_{\lchar{\tau^*}{j-1}, \xchar{\tau^*}{j}} \)};
    
    % Legend
    \begin{scope}[shift={(1, -2)}]
      \filldraw[Rset, draw=black, thin] (0, 0) rectangle ++(0.8, 0.5);
      \node[right, font=\footnotesize] at (0.9, 0.2) {\( \mathcal{R}_{\lchar{\tau^*}{j-1},\xchar{\tau^*}{j}} \)};

      \filldraw[Wset, draw=black, thin] (5, 0) rectangle ++(0.8, 0.5);
      \node[right, font=\footnotesize] at (5.9, 0.2) {\( \mathcal{W}_{\lchar{\tau^*}{j-1},\xchar{\tau^*}{j}} \)};

      \filldraw[Cset, draw=black, thin] (10, 0) rectangle ++(0.8, 0.5);
      \node[right, font=\footnotesize] at (10.9, 0.2) {\( \mathcal{C}_{\lchar{\tau^*}{j-1},\xchar{\tau^*}{j}} \)};

      \filldraw[Eset, draw=black, thin] (15, 0) rectangle ++(0.8, 0.5);
      \node[right, font=\footnotesize] at (15.9, 0.2) {\( \mathcal{E}_{\lchar{\tau^*}{j-1},\xchar{\tau^*}{j}} \)};
    \end{scope}

  \end{tikzpicture}
  \caption{The visual description for the sets 
    \( \mathcal{R}_{\lchar{\tau^*}{j-1},\xchar{\tau^*}{j}}, 
    \mathcal{W}_{\lchar{\tau^*}{j-1},\xchar{\tau^*}{j}}, 
    \mathcal{C}_{\lchar{\tau^*}{j-1},\xchar{\tau^*}{j}}, \) and \( 
    \mathcal{E}_{\lchar{\tau^*}{j-1},\xchar{\tau^*}{j}} \) relative to the cell \( \xchar{\tau^*}{j} \).}
  
  \label{fig:RWCE}
\end{figure}
 
 Then \eqref{eq:individual_edge} is equivalent to
 \begin{multline} \label{eq:individual_edge_modified}
   \left( \prod_{r \in \mathcal{R}_{\lchar{\tau^*}{j-1}, \xchar{\tau^*}{j}}}
     \frac{q^{a(r)} - t^{l(r) + 1}}{q^{a(r) + 1} - t^{l(r) + 1}} \right)
   \left( \prod_{z \in \mathcal{C}_{\lchar{\tau^*}{j-1}, \xchar{\tau^*}{j}}}
     \frac{q^{a(z)+1} - t^{l(z)}}{q^{a(z) + 1} - t^{l(z) + 1}}\right) \\
   \left( \prod_{v \in \mathcal{W}_{\lchar{\tau^*}{j-1}, \xchar{\tau^*}{j}}}
     \frac{q\xchar{\tau^*}{j} - qtv}{\xchar{\tau^*}{j} - qtv} \right)
   \left( \prod_{u \in \mathcal{E}_{\lchar{\tau^*}{j-1}, \xchar{\tau^*}{j}}}
     \frac{q\xchar{\tau^*}{j} - qtu}{\xchar{\tau^*}{j} - qtu} \right) = 1
 \end{multline}
 and \eqref{eq:individual_corner} is equivalent to
 \begin{multline}\label{eq:individual_corner_modified}
   \left( \prod_{r \in \mathcal{R}_{\lchar{\tau^*}{j-1}, \xchar{\tau^*}{j}}}
     \frac{q^{a(r)} - t^{l(r) + 1}}{q^{a(r) + 1} - t^{l(r) + 1}} \right)
   \left( \prod_{z \in \mathcal{C}_{\lchar{\tau^*}{j-1}, \xchar{\tau^*}{j}}}
     \frac{q^{a(z)+1} - t^{l(z)}}{q^{a(z) + 1} - t^{l(z) + 1}}\right) \\
   \left( \prod_{v \in \mathcal{W}_{\lchar{\tau^*}{j-1}, \xchar{\tau^*}{j}}}
     \frac{q\xchar{\tau^*}{j} - qtv}{\xchar{\tau^*}{j} - qtv} \right)
   \left( \prod_{u \in \mathcal{E}_{\lchar{\tau^*}{j-1}, \xchar{\tau^*}{j}}}
     \frac{q\xchar{\tau^*}{j} - qtu}{\xchar{\tau^*}{j} - qtu} \right)
   \frac{q-\wt(i, j)}{q-1}
   = 1.
 \end{multline}

 First, we prove \eqref{eq:individual_edge_modified}.
 By the construction of \( \tau^* \),
 \( \xchar{\tau^*}{j} \) is in the last cell in the first row.
 Therefore, there are no cells to the right of or below \( \xchar{\tau^*}{j} \) in $\tau^{*}_{\le j-1}$,
 that is, \( \mathcal{C}_{\lchar{\tau^*}{j-1}, \xchar{\tau^*}{j}} = \emptyset \)
 and \( \mathcal{E}_{\lchar{\tau^*}{j-1}, \xchar{\tau^*}{j}} = \emptyset \).
 Let \( r_1 ,\dots, r_m \) be the cells in \( \mathcal{R}_{\lchar{\tau^*}{j-1}, \xchar{\tau^*}{j}} \) and
 \( v_1 ,\dots, v_m \) be the cells in \( \mathcal{W}_{\lchar{\tau^*}{j-1}, \xchar{\tau^*}{j}} \)
 ordered from the left to the right, respectively.
 We prove \eqref{eq:individual_edge_modified} by showing that
 for each \( 1 \le p \le m \),
 \begin{equation}\label{eq:show_1_left}
   \frac{q^{a(r_p)} - t^{l(r_p)+1}}{q^{a(r_p)+1} - t^{l(r_p)+1}} \frac{q\xchar{\tau^*}{j} - qtv_p}{\xchar{\tau^*}{j} - qtv_p} = 1.
 \end{equation}
 Here, the arm lengths \( a(r_p) \) and the leg lengths \( l(r_p) \) are evaluated
 in the shape \( \lchar{\tau^*}{j-1} \), before the cell \( \xchar{\tau^*}{j} \) is appended.
  Note that \( v_p \) lies \( a(r_p)+1 \) columns to the left
  and \( l(r_p) \) rows above \( \xchar{\tau^*}{j} \).
  Thus
  \[
    \frac{q\xchar{\tau^*}{j} - qtv_p}{\xchar{\tau^*}{j} - qtv_p}
    = \frac{q - qt\left(v_p/\xchar{\tau^*}{j}\right)}{1 - qt\left(v_p/\xchar{\tau^*}{j}\right)}
    = \frac{q - qt(q^{-a(r_p)-1}t^{l(r_p)})}{1 - qt(q^{-a(r_p)-1}t^{l(r_p)})}
    = \frac{q^{a(r_p)+1} - t^{l(r_p)+1}}{q^{a(r_p)} - t^{l(r_p)+1}},
  \]
  and \eqref{eq:show_1_left} holds; therefore, \eqref{eq:individual_edge} holds.

  Now consider \eqref{eq:individual_corner_modified}.
  By the same argument,
  \[
    \left( \prod_{r \in \mathcal{R}_{\lchar{\tau^*}{j-1}, \xchar{\tau^*}{j}}}
     \frac{q^{a(r)} - t^{l(r) + 1}}{q^{a(r) + 1} - t^{l(r) + 1}} \right)
   \left( \prod_{v \in \mathcal{W}_{\lchar{\tau^*}{j-1}, \xchar{\tau^*}{j}}}
     \frac{q\xchar{\tau^*}{j} - qtv}{\xchar{\tau^*}{j} - qtv} \right) = 1.
  \]
  The set \( \mathcal{E}_{\lchar{\tau^*}{j-1}, \xchar{\tau^*}{j}} \) can be partitioned into
  \( \mathcal{E}^{(1)}_{\lchar{\tau^*}{j-1}, \xchar{\tau^*}{j}} ,\dots, \mathcal{E}^{(s)}_{\lchar{\tau^*}{j-1}, \xchar{\tau^*}{j}} \)
  by collecting cells in the same rows, labeled from top to bottom.
  Let \(\mathsf{r}_p\) denote the row index of the cells in
  \(\mathcal{E}^{(p)}_{\lchar{\tau^*}{j-1}, \xchar{\tau^*}{j}}\), with
  \(\mathsf{r}_1>\cdots>\mathsf{r}_s\), and set \(\mathsf{r}_{s+1}=0\).
  The set \( \mathcal{C}_{\lchar{\tau^*}{j-1}, \xchar{\tau^*}{j}} \) can then be partitioned into
  \( \mathcal{C}^{(1)}_{\lchar{\tau^*}{j-1}, \xchar{\tau^*}{j}} ,\dots, \mathcal{C}^{(s)}_{\lchar{\tau^*}{j-1}, \xchar{\tau^*}{j}} \)
  by setting
  \[
    \mathcal{C}^{(p)}_{\lchar{\tau^*}{j-1}, \xchar{\tau^*}{j}}
    = \{z\in\mathcal{C}_{\lchar{\tau^*}{j-1}, \xchar{\tau^*}{j}}:
    \mathsf{r}_{p+1}<\operatorname{row}(z)\le \mathsf{r}_p\}.
  \]
  Let \( y_p \) be the top cell in \( \mathcal{C}^{(p)}_{\lchar{\tau^*}{j-1}, \xchar{\tau^*}{j}} \).
  Since \( \xchar{\tau^*}{j} \) is the addable cell in the column containing \( y_p \),
  we can write \( y_p t^{l(y_p)+1} = \xchar{\tau^*}{j} \).
  Now we have
  \begin{align*}
    &\prod_{u \in \mathcal{E}^{(p)}_{\lchar{\tau^*}{j-1}, \xchar{\tau^*}{j}}} \frac{q\xchar{\tau^*}{j} - qtu}{\xchar{\tau^*}{j} - qtu}
      \prod_{z \in \mathcal{C}^{(p)}_{\lchar{\tau^*}{j-1}, \xchar{\tau^*}{j}}} \frac{q^{a(z)+1} - t^{l(z)}}{q^{a(z)+1} - t^{l(z)+1}} \\
    &\quad= q^{a(y_p)-a(y_{p-1})} \prod_{u \in \mathcal{E}^{(p)}_{\lchar{\tau^*}{j-1}, \xchar{\tau^*}{j}}} \frac{tu - \xchar{\tau^*}{j}}{qtu - \xchar{\tau^*}{j}}
      \prod_{z \in \mathcal{C}^{(p)}_{\lchar{\tau^*}{j-1}, \xchar{\tau^*}{j}}} \frac{q^{a(z)+1} - t^{l(z)}}{q^{a(z)+1} - t^{l(z)+1}}\\
    &\quad= q^{a(y_p)-a(y_{p-1})} \frac{t(q^{a(y_{p-1})+1}y_p) - \xchar{\tau^*}{j}}{qt(q^{a(y_p)}y_p) - \xchar{\tau^*}{j}}
      \frac{q^{a(y_p)+1} - t^{l(y_p)}}{q^{a(y_p)+1} - t^{l(y_{p+1})}} \\
    &\quad= q^{a(y_p)-a(y_{p-1})} \frac{q^{a(y_{p-1})+1} - t^{l(y_p)}}{q^{a(y_p)+1} - t^{l(y_p)}}
      \frac{q^{a(y_p)+1} - t^{l(y_p)}}{q^{a(y_p)+1} - t^{l(y_{p+1})}},
  \end{align*}
  where we set \( a(y_0) = 0 \) and
  \( l(y_{s+1}) = |\mathcal{C}_{\lchar{\tau^*}{j-1}, \xchar{\tau^*}{j}}| \)
  for the boundary condition.
  In the first equality, we factor out the common factor \( q \) from the first product;
  the second equality follows by telescoping both products along with the fact that
  \( a(z) \) stays constant in \( \mathcal{C}_{\lchar{\tau^*}{j-1}, \xchar{\tau^*}{j}} \);
  and the third equality follows from \( \xchar{\tau^*}{j} = y_pt^{l(y_p) + 1} \).
  
  Therefore we have
  \begin{align*}
    \prod_{u \in \mathcal{E}_{\lchar{\tau^*}{j-1}, \xchar{\tau^*}{j}}} \frac{q\xchar{\tau^*}{j} - qtu}{\xchar{\tau^*}{j} - qtu}
    \prod_{z \in \mathcal{C}_{\lchar{\tau^*}{j-1}, \xchar{\tau^*}{j}}} \frac{q^{a(z)+1} - t^{l(z)}}{q^{a(z)+1} - t^{l(z)+1}}
    &=
      \prod_{p=1}^{s} q^{a(y_p)-a(y_{p-1})}
      \frac{q^{a(y_{p-1})+1} - t^{l(y_p)}}{q^{a(y_p)+1} - t^{l(y_p)}}
      \frac{q^{a(y_p)+1} - t^{l(y_p)}}{q^{a(y_p)+1} - t^{l(y_{p+1})}} \\
    &=
      q^{a(y_{s})} \frac{q-1}{q^{a(y_s)+1} - t^{l(y_{s+1})}}
      =
      \frac{q-1}{q - q^{-a(y_s)}t^{l(y_{s+1})}}.
  \end{align*}

  Let \( L_{j-1} \) denote the length of the first row of
  \( \lchar{\tau^*}{j-1} \). By the argument in the first part of this proof,
  \( \wt(i, j) = qt\xchar{\tau^*}{i}/q^{L_{j-1}} \).
  Note that \( a(y_s) \) counts the length difference of the rows
  and \( l(y_{s+1}) = |\mathcal{C}_{\lchar{\tau^*}{j-1},\xchar{\tau^*}{j}}| \).
  Hence, we have
  \[
    \prod_{u \in \mathcal{E}_{\lchar{\tau^*}{j-1}, \xchar{\tau^*}{j}}}
    \frac{q\xchar{\tau^*}{j} - qtu}{\xchar{\tau^*}{j} - qtu}
    \prod_{z \in \mathcal{C}_{\lchar{\tau^*}{j-1}, \xchar{\tau^*}{j}}}
    \frac{q^{a(z)+1} - t^{l(z)}}{q^{a(z)+1} - t^{l(z)+1}}
    \frac{q-\wt(i, j)}{q-1} = 1,
  \]
  which proves \eqref{eq:individual_corner}.
  Combining these identities, we obtain
  \( \coeff_{\pi, \wt, \tau^*} = (-1)^nT_{\mu} \).

This proves the theorem when \( |w|=\mu_1 \). For a general \(w\), let
\(r=\mu_1-|w|\) and choose a maximal extension \(\bar w\) as in
Section~\ref{subsec:statement_of_main_theorem}. By construction,
\(\pi_{\mu,w}\) is obtained by appending \(r\) East steps to
\(\pi_{\mu,\bar w}\), with the same corner weights. Therefore,
\[
  \chi(\pi_{\mu,w},\wt_{\mu,w})
  =d_-^r\chi(\pi_{\mu,\bar w},\wt_{\mu,\bar w})
  =(-1)^{|\mu|}T_\mu d_-^r I_{\mu,\bar w}
  =(-1)^{|\mu|}T_\mu I_{\mu,w},
\]
where the last equality follows by applying \eqref{eq:I_basis_minus}
repeatedly. This proves the theorem for every \((\mu,w)\in\FPI\).

\bibliographystyle{alpha}

\newcommand{\etalchar}[1]{$^{#1}$}

\end{document}